\documentclass[11pt,twoside,reqno]{amsart}

\usepackage{microtype}
\usepackage{version}         

\usepackage{color}

\usepackage[OT1]{fontenc}
\usepackage{type1cm}
\usepackage{amssymb}

\usepackage[left=2.3cm,top=3cm,right=2.3cm]{geometry}

\geometry{a4paper,centering}

\numberwithin{equation}{section}

\theoremstyle{plain}
\newtheorem{theorem}{Theorem}[section]

\newtheorem{corollary}[theorem]{Corollary}

\newtheorem{proposition}[theorem]{Proposition}

\theoremstyle{remark}

\newtheorem{example}[theorem]{Example}

\theoremstyle{definition}

\newtheorem{question}[theorem]{Question}

\newsavebox{\framedbox}

\newcommand{\BB}{\mathcal{B}}

\newcommand{\HH}{\mathcal{H}}

\newcommand{\QQ}{\mathcal{Q}}

\newcommand{\R}{\mathbb{R}}

\newcommand{\Z}{\mathbb{Z}}
\newcommand{\N}{\mathbb{N}}

\newcommand{\iii}{\mathtt{i}}
\newcommand{\jjj}{\mathtt{j}}
\newcommand{\kkk}{\mathtt{k}}
\newcommand{\eps}{\varepsilon}
\newcommand{\fii}{\varphi}
\newcommand{\roo}{\varrho}
\newcommand{\ualpha}{\overline{\alpha}}
\newcommand{\lalpha}{\underline{\alpha}}

\newcommand{\Lip}{\operatorname{Lip}}

\DeclareMathOperator{\dist}{dist}
\DeclareMathOperator{\diam}{diam}

\DeclareMathOperator{\dima}{\overline{dim}_A}
\DeclareMathOperator{\diml}{\underline{dim}_A}

\DeclareMathOperator{\udimreg}{\overline{dim}_{reg}}
\DeclareMathOperator{\ldimreg}{\underline{dim}_{reg}}

\DeclareMathOperator{\dimh}{dim_H}

\DeclareMathOperator{\udimm}{\overline{dim}_M}
\DeclareMathOperator{\ldimm}{\underline{dim}_M}

\DeclareMathOperator{\dimp}{dim_p}

\begin{document}

\title
{Measures with predetermined regularity and inhomogeneous self-similar sets}

\author{Antti K\"aenm\"aki}
\author{Juha Lehrb\"ack}
\address{Department of Mathematics and Statistics \\
         P.O.\ Box 35 (MaD) \\
         FI-40014 University of Jyv\"askyl\"a\\
         Finland}
\email{antti.kaenmaki@jyu.fi}
\email{juha.lehrback@jyu.fi}

\thanks{JL has been supported in part by the Academy of Finland (project \#252108)}
\subjclass[2000]{Primary 28A75, 54E35; Secondary 54F45, 28A80}
\keywords{Doubling metric space, uniform perfectness, Assouad dimension, lower dimension, inhomogeneous self-similar set}
\date{\today}

\begin{abstract}
    We show that if $X$ is a uniformly perfect complete metric space satisfying the finite doubling property, then there exists a fully supported measure with lower regularity dimension as close to the lower dimension of $X$ as we wish. Furthermore, we show that, under the condensation open set condition, the lower dimension of an inhomogeneous self-similar set $E_C$ coincides with the lower dimension of the condensation set $C$, while the Assouad dimension of $E_C$ is the maximum of the Assouad dimensions of the corresponding self-similar set $E$ and the condensation set $C$. If the Assouad dimension of $C$ is strictly smaller than the Assouad dimension of $E$, then the upper regularity dimension of any measure supported on $E_C$ is strictly larger than the Assouad dimension of $E_C$. Surprisingly, the corresponding statement for the lower regularity dimension fails.
\end{abstract}

\maketitle

\section{Introduction}

It is a well-known fact that every complete doubling metric space $X$ carries a doubling measure; see \cite{LuukkainenSaksman1998}. Even more is true: for each $s>\dima(X)$ there exists a doubling measure $\mu$ supported on $X$ such that $\udimreg(\mu)<s$. Here  $\dima(X)$ is the (upper) Assouad dimension of $X$ and $\udimreg(\mu)$ is the upper regularity dimension of $\mu$; see \S \ref{sect:metric} for the definitions. Recall that $X$ is doubling if and only if $\dima(X)<\infty$, and that the bound $\dima(X)\le \udimreg(\mu)$ is always valid for a doubling measure $\mu$ supported on $X$.
In the case of a compact space $X$, the existence of such a measure was proven by Vol'berg and Konyagin~\cite[Theorem~1]{VolbergKonyagin1987}; see also~\cite[Section~13]{Heinonen2001}. They also gave an example of a compact metric space $X$ where $\dima(X) < \udimreg(\mu)$ for all doubling measures $\mu$ supported on $X$; see~\cite[Theorem~4]{VolbergKonyagin1987}. In~\cite{LuukkainenSaksman1998}, Luukkainen and Saksman generalized the existence result to complete spaces using the compact case and a limiting argument, and in~\cite{KaenmakiRajalaSuomala2012a}, an elegant direct proof in the complete case was given using a suitable ``nested cube structure'' of doubling metric spaces.  

On the other hand, in~\cite{BylundGudayol2000}, Bylund and Gudayol considered the ``dual'' question for the above results. A particular consequence of their main result~\cite[Theorem~9]{BylundGudayol2000} is that if $X$ is a complete doubling (pseudo-)metric space, then for each $0\le t<\diml(X)$ there exists a doubling measure $\mu$ supported on $X$ such that $\ldimreg(\mu)>t$. Here $\diml(X)$ is the lower (Assouad) dimension of $X$ and $\ldimreg(\mu)$ is the lower regularity dimension of $\mu$. As above, the bound $\diml(X)\ge \ldimreg(\mu)$ is always valid for such measures $\mu$. Again, the results in~\cite{BylundGudayol2000} are first established for compact spaces, and a limiting argument as in~\cite{LuukkainenSaksman1998} takes care of the extension to complete spaces. We remark that the result \cite[Theorem~9]{BylundGudayol2000} in fact takes into account both upper and lower regularities and states that if $X$ is a complete doubling (pseudo-)metric space and $0\le t<\diml(X)\le \dima(X)<s<\infty$, then there exists a measure $\mu$ supported on $X$ such that $t<\ldimreg(\mu)\le \udimreg(\mu)<s$. Here the middle inequality is a triviality.

In this paper, we extend and clarify several aspects related to the above results. First of all, in Theorem~\ref{thm:finite-condition}, we generalize the ``lower'' part of the Bylund and Gudayol result. Instead of the doubling condition, we assume a weaker finite doubling property, and  
show that then for each $0\le t<\diml(X)$ there exists a measure $\mu$ which is supported on the (uniformly perfect) metric space $X$ and satisfies $\ldimreg(\mu)>t$; recall that $X$ is uniformly perfect if and only if $\diml(X)>0$. The proof of Theorem~\ref{thm:finite-condition} is rather transparent and works immediately in the the non-compact case in the spirit of~\cite{KaenmakiRajalaSuomala2012a}. 
If $X$ is doubling, then our construction can be modified so that also the resulting measure $\mu$ is doubling, and hence we recover the ``lower'' part of the conclusion of Bylund and Gudayol; see Theorem~\ref{thm:doubling}. As a consequence of Theorem~\ref{thm:finite-condition}, we see that if $X$ satisfies the finite doubling property, then $X$ is uniformly perfect if and only if $X$ supports a reverse-doubling measure.

Our second point of interest lies in the general phenomena behind the example of Vol'berg and Konyagin~\cite[Theorem~4]{VolbergKonyagin1987}, where the space $X$ is a simple special case of an inhomogeneous self-similar set. In Theorem~\ref{thm:upper_reg}, we show that in fact a similar conclusion holds for a large class of inhomogeneous self-similar sets with small enough condensation sets. After this a natural question is whether a corresponding result is true also for lower regularities and lower dimensions of inhomogeneous self-similar sets. Despite natural ``dualities'' concerning many other questions related to upper and lower regularities and Assouad and lower dimensions (cf.\ e.g.~\cite{KaenmakiLehrbackVuorinen2013}), the answer here is negative, as we show in Proposition~\ref{prop:non-VK}. The existence of spaces with the property that $\ldimreg(\mu)<\diml(X)$ for all measures $\mu$ supported on $X$ is thus left as an open question.

Since the above considerations rely on sufficient knowledge of the Assouad and lower dimensions of inhomogeneous self-similar sets, which (to our best knowledge) are not available in the literature, we need to establish such results which of course have also independent interest. If $E_C$ is an inhomogeneous self-similar set, then the formula
\begin{equation*}
  \dim(E_C) = \max\{ \dim(E), \dim(C) \}
\end{equation*}
holds for the Hausdorff and packing dimensions without any separation conditions. By assuming the open set condition, Fraser \cite{Fraser2012} proved the formula for the upper Minkowski dimension. Baker, Fraser, and M\'ath\'e \cite{BakerFraserMathe2017} later observed that it may fail without the open set condition. By posing an additional separation condition on the condensation set, we prove the above formula for the Assouad dimension in Theorem \ref{thm:dim of inhomog}. For the lower dimension we show in Theorem \ref{thm:dim of inhomog2} that $\diml(E_C) = \diml(C)$, also under this additional separation condition. Neither of these results holds if we only assume the open set condition; see Example~\ref{ex:no_separation}.

\section{Setting}\label{sect:metric}

Let $X=(X,d)$ be a metric space. The closed ball with center $x\in X$ and radius $r>0$ is $B(x,r)=\{y\in X : d(x,y)\le r\}$. For notational convenience, we keep to the convention that each ball $B\subset X$ comes with a fixed center and radius. This makes it possible to use notation such as $2B=B(x,2r)$ without explicit reference to the center and the radius of the ball $B=B(x,r)$. For convenience, we also make the general assumption that the space $X$ contains at least two points.

We say that $X$ satisfies the \emph{finite doubling property} if any ball $B(x,2r) \subset X$ can be covered by finitely many balls of radius $r$. Furthermore, $X$ is \emph{doubling} if the number of balls above is uniformly bounded by $N\in\N$. 
Iteration of the doubling condition shows that if $X$ is doubling, then there are constants $C \ge 1$ and $0\le s\le \log_2 N$ such that each ball $B(x,R)$ can be covered by at most $C(r/R)^{-s}$ balls of radius $r$ for all $0<r<R<\diam(X)$. The infimum of exponents $s$ for which this holds (for some constant $C=C(s,X)$) is called the \emph{Assouad} (or \emph{upper Assouad}) \emph{dimension} of $X$ and is denoted by $\dima(X)$. Hence $X$ is doubling if and only if $\dima(X)<\infty$.
We refer to Luukkainen \cite{Luukkainen1998} for the basic properties and a historical account on the Assouad dimension. 

Conversely to the above definition, we may also consider all $t \ge 0$ for which there is a constant $c>0$ so that if $0<r<R<\diam(X)$, then for every $x \in X$ at least $c(r/R)^{-t}$ balls of radius $r$ are needed to cover $B(x,R)$. The supremum of all such $t$ is called the \emph{lower} (or \emph{lower Assouad}) \emph{dimension} of $X$ and is denoted by $\diml(X)$. 
Considering the restriction metric, the definitions of Assouad and lower dimensions extend to all subsets $E\subset X$, although in the case of one-point sets $E=\{x_0\}$ we remove the restriction $R<\diam(E)$.  

A metric space $X$ is \emph{uniformly perfect} if there exists a constant $C \ge 1$ so that for every $x \in X$ and $r>0$ we have $B(x,r) \setminus B(x,r/C) \ne \emptyset$ whenever $X \setminus B(x,r) \ne \emptyset$. In~\cite[Lemma~2.1]{KaenmakiLehrbackVuorinen2013}, it was shown that a metric space $X$ with $\# X\ge 2$ is uniformly perfect if and only if $\diml(X) > 0$.

By a \emph{measure} we exclusively refer to a nontrivial Borel regular outer measure for which bounded sets have finite measure.
We say that a measure $\mu$ on $X$ is \emph{doubling} if there is a constant $C \ge 1$ such that
$0 < \mu(2B) \le C\mu(B)$ 
for all closed balls $B$ of $X$.
The existence of a doubling measure yields an upper bound for the Assouad dimension of $X$. Indeed, 
if $\mu$ is doubling, then there exist $s>0$ and $c>0$ such that
\begin{equation*} \label{eq:upper_reg}
  \frac{\mu(B(x,r))}{\mu(B(x,R))}\ge c\Bigl(\frac rR\Bigr)^s
\end{equation*}
for all $x\in X$ and $0<r<R<\diam(X)$. The infimum of such admissible exponents $s$ is called the \emph{upper regularity dimension} of $\mu$ and is denoted by $\udimreg(\mu)$. A simple volume argument implies that $\dima(X) \le \udimreg(\mu)$ whenever $\mu$ is a doubling measure on $X$. 

Conversely, 
the supremum of $t\ge 0$ for which there exists a constant $C \ge 1$ such that
\begin{equation}\label{eq:lower_reg}
  \frac{\mu(B(x,r))}{\mu(B(x,R))}\leq C\Bigl(\frac rR\Bigr)^t,
\end{equation}
whenever $x\in X$ and $0<r<R<\diam(X)$, is called the \emph{lower regularity dimension} of $\mu$ and is denoted by $\ldimreg(\mu)$. It follows directly from~\eqref{eq:lower_reg} that $\ldimreg(\mu) \le \diml(X)$. From~\cite[Lemma~3.1]{KaenmakiLehrbackVuorinen2013} it follows that $\ldimreg(\mu)>0$ for a doubling measure $\mu$ in a uniformly perfect space $X$. 
Notice also that the property $\ldimreg(\mu)>0$ is equivalent to the \emph{reverse-doubling} condition that there exist constants $c>1$ and $\tau>1$ such that
$\mu(B(x,\tau r)) \ge c\mu(B(x,r))$ 
whenever $x\in X$ and $0<r\le \diam X/(2\tau)$.

A measure $\mu$ is \emph{$s$-regular} (for $s > 0$) if there is a constant $C \ge 1$ such that
\[
  C^{-1}r^s \le \mu(B(x,r)) \le Cr^s
\]
for all $x \in X$ and every $0 < r < \diam(X)$. 
It is immediate that if $X$ is bounded, then a measure $\mu$ is $s$-regular if and only if $s = \ldimreg(\mu) = \udimreg(\mu)$. We say that $X$ is \emph{$s$-regular} if it carries an $s$-regular measure $\mu$. A subset $E \subset X$ is \emph{$s$-regular} if it is an $s$-regular space in the relative metric.

We follow the convention that letters $C$ and $c$ are sometimes used to denote positive constants whose exact value is not important and may be different at each occurrence. If $a$ and $b$ are quantities such that $a\le Cb$, then we may write $a\lesssim b$, and if $a\lesssim b\lesssim a$, then $a\approx b$.

\section{Measures with lower regularity close to lower dimension}

In this section, we construct a measure supported on the whole space so that its lower regularity dimension is as close to the lower dimension of the space as we wish.
We begin by recalling the following construction of ``dyadic cubes'' from \cite[Theorem~2.1]{KaenmakiRajalaSuomala2012a}.

\begin{proposition} \label{prop:dyadic}
  If $X$ is a metric space satisfying the finite doubling property and $0<\roo<\frac13$, then there exists a collection $\{ Q_{k,j} : k \in \Z \text{ and } j \in N_k \subset \N \}$ of Borel sets having the following properties:
  \begin{enumerate}
    \item\label{D1} $X = \bigcup_{j \in N_k} Q_{k,j}$ for every $k \in \Z$,
    \item\label{D2} $Q_{m,i} \cap Q_{k,j} = \emptyset$ or $Q_{m,i} \subset Q_{k,j}$ whenever $k,m \in \Z$, $m \ge k$, $i \in N_m$, and $j \in N_k$,
    \item\label{D3} for every $k\in \Z$ and $j \in N_k$ there exists a point $x_{k,j} \in X$ so that
    \begin{equation*}
      B(x_{k,j},c \roo^k) \subset Q_{k,j} \subset B(x_{k,j},C\roo^k)
    \end{equation*}
    where $c = \tfrac12-\tfrac{\roo}{1-\roo}$ and $C = \tfrac{1}{1-\roo}$,
    \item\label{D4} there exists a point $x_0\in X$ so that for every $k\in \Z$ there is $j\in N_k$ so that $B(x_0,cr^k)\subset Q_{k,j}$.
  \end{enumerate}
\end{proposition}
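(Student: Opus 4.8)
The plan is to build the cubes from a hierarchy of nets, in the spirit of the standard dyadic-cube constructions (Christ; Hyt\"onen--Kairema). \emph{First}, for every $k\in\Z$ I would fix a maximal $\roo^k$-separated set $\{x_{k,j} : j\in N_k\}$, i.e.\ a set of points that are pairwise at distance $\ge\roo^k$ and such that every point of $X$ lies within $\roo^k$ of it. The finite doubling property guarantees that each such net meets every ball in finitely many points, so $N_k$ is at most countable and may be taken inside $\N$; it also lets me force a single anchor point $x_0$ into every net, which will yield~(\ref{D4}) at the end. Maximality supplies the two features used throughout: every point of $X$, and in particular every finer center $x_{k+1,i}$, lies within $\roo^k$ of some level-$k$ center, and distinct level-$k$ centers are at distance $\ge\roo^k$.

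\emph{Second}, I would record the tree structure. To each $x_{k+1,i}$ I assign a \emph{parent} index $\pi_k(i)\in N_k$ for which $x_{k,\pi_k(i)}$ is a nearest level-$k$ center (ties broken by a fixed well-ordering of $N_k$), so that $d(x_{k+1,i},x_{k,\pi_k(i)})\le\roo^k$. Composing parent maps gives an ancestor map for $m\ge k$ and hence a partial order on indices, and the cubes should be the ``descendant regions'' of this tree, informally $Q_{k,j}=\bigcup\{Q_{k+1,i} : \pi_k(i)=j\}$. Making this rigorous over the doubly infinite range $k\in\Z$ (there is neither a finest nor a coarsest level from which to run a recursion) is the step that needs care: I would first build \emph{model} open and closed sets along the tree as unions of balls $B(x_{m,i},\cdot\,\roo^m)$ over descendants, verify their separation and covering properties, and then extract Borel sets $Q_{k,j}$ sandwiched between them that form, at each fixed level $k$, a genuine partition of $X$ respecting the tree. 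The covering property~(\ref{D1}) and the nesting/disjointness dichotomy~(\ref{D2}) are then built into the partition, and measurability holds since everything is assembled from balls.

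\emph{Third}, I would establish the two-sided radius bound~(\ref{D3}), which pins down the constants. The \emph{outer} inclusion telescopes the parent displacements: since a child center is within $\roo^k$ of $x_{k,j}$ and $Q_{k+1,i}\subseteq B(x_{k+1,i},C\roo^{k+1})$, one gets $Q_{k+1,i}\subseteq B\big(x_{k,j},\roo^k+C\roo^{k+1}\big)=B(x_{k,j},C\roo^k)$ because $\roo^k+\tfrac{\roo^{k+1}}{1-\roo}=\tfrac{\roo^k}{1-\roo}$; unioning over children and iterating (formally, reading this off the model cubes, which sidesteps the absence of a finest level) gives $Q_{k,j}\subseteq B(x_{k,j},C\roo^k)$ with $C=\tfrac1{1-\roo}$. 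For the \emph{inner} inclusion I would argue by one step of refinement: if $d(y,x_{k,j})\le c\roo^k$, then by~(\ref{D1}) at level $k+1$ the point $y$ lies in some $Q_{k+1,i}$, whose center satisfies $d(x_{k+1,i},x_{k,j})\le C\roo^{k+1}+c\roo^k$; combining this with the $\roo^k$-separation of the level-$k$ net shows $d(x_{k,j},x_{k,j'})\le 2(C\roo^{k+1}+c\roo^k)$ for any competitor $x_{k,j'}$ at least as close, so $\roo^k\le 2(C\roo^{k+1}+c\roo^k)$. The bookkeeping $2C\roo+2c\le1$ thus forces precisely $c=\tfrac12-\tfrac{\roo}{1-\roo}$, which is positive exactly under the hypothesis $\roo<\tfrac13$ — this is where that assumption is consumed — and when it holds strictly $x_{k,j}$ is the nearest level-$k$ center to $x_{k+1,i}$, whence $\pi_k(i)=j$ and $y\in Q_{k,j}$. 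Finally~(\ref{D4}) is immediate: applying~(\ref{D3}) to the anchor $x_0=x_{k,j_k}$ present in every net gives $B(x_0,c\roo^k)\subseteq Q_{k,j_k}$ for all $k$.

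The main obstacle I anticipate is the passage from the centers and the parent tree to honest Borel cubes that simultaneously partition $X$ at every scale and nest across all scales, given that the index range $k\in\Z$ has no extremal level from which to run an induction; the separation-versus-drift competition underlying the inner radius, and hence the sharp role of $\roo<\tfrac13$, is the delicate quantitative point inside that step.
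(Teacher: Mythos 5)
First, a point of reference: the paper does not prove this proposition at all --- it is recalled verbatim from \cite[Theorem~2.1]{KaenmakiRajalaSuomala2012a} --- so there is no in-paper proof to compare against; the comparison below is with the construction of that reference. Your strategy (maximal $\roo^k$-separated nets all anchored at a common point $x_0$, a nearest-centre parent map, cubes realised as descendant regions sandwiched between model open and closed sets) is exactly the route taken there, and within it your outer-radius telescoping $\roo^k+C\roo^{k+1}=C\roo^k$ with $C=\tfrac{1}{1-\roo}$, the countability of the nets via finite doubling, and the reduction of property~(4) to property~(3) via the anchor point are all correct. But as it stands the proposal has a genuine gap precisely at the step you flag and then defer: for a doubly infinite range of scales, turning the parent tree into sets $Q_{k,j}$ that partition $X$ at \emph{every} level $k\in\Z$ and nest across \emph{all} levels is the entire content of the cited proof, and ``build model sets and extract Borel sets sandwiched between them'' names the problem rather than solving it. The obstruction is concrete: if one tracks the level-$m$ Voronoi-type cell containing a point $y$ and passes to its level-$k$ ancestor, that ancestor's centre is only within $\sum_{n\ge k}\roo^n=\roo^k/(1-\roo)$ of $y$, so two candidate ancestors arising from different fine levels $m$ are merely within $2\roo^k/(1-\roo)$ of each other, which does not contradict $\roo^k$-separation; hence the ancestor chain need not stabilise, a naive union over descendants produces overlapping sets, and properties~(1)--(2) do not follow without the order-dependent selection argument that the reference carries out.

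There is also a quantitative flaw in your inner-radius step. Your one-step argument yields, for a competing centre $x_{k,j'}$, only $\roo^k\le 2(c\roo^k+C\roo^{k+1})$, and with the stated constants $2c+2C\roo=1$ \emph{exactly}; the strict inequality you need to conclude that $x_{k,j}$ is the unique nearest level-$k$ centre of $x_{k+1,i}$ therefore fails at the boundary, and since the balls $B(x_{k,j},c\roo^k)$ are closed this case cannot be dismissed --- ties can genuinely occur and the tie-breaking order may assign the child to $j'$, destroying the inclusion $B(x_{k,j},c\roo^k)\subset Q_{k,j}$ at the exact constant. The constant $c=\tfrac12-\tfrac{\roo}{1-\roo}$ in fact arises differently: the half-separation of the net puts $B(x_{k,j},\roo^k/2)$ inside the level-$k$ Voronoi cell, and one subtracts the total descendant drift $\sum_{m>k}\roo^m=\roo^{k+1}/(1-\roo)$, whose strict positivity supplies, level by level along the whole descendant chain, exactly the strictness your argument lacks. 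You are right, however, that $\roo<\tfrac13$ is consumed precisely in making $c>0$.
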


Relying on the existence of such cubes we are able to construct the desired measure.

\begin{theorem} \label{thm:finite-condition}
  Suppose that $X$ is a uniformly perfect complete metric space satisfying the finite doubling property. Then for each $\eps>0$ there exists a measure $\mu$ supported on $X$ such that $\ldimreg(\mu) > \diml(X)-\eps$.
\end{theorem}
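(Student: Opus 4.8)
The plan is to fix an exponent $t$ with $\diml(X)-\eps/2<t<\diml(X)$ and to build $\mu$ by evenly distributing mass through the dyadic cubes of Proposition~\ref{prop:dyadic} for a sufficiently small parameter $\roo$. Since $t<\diml(X)$, there is a constant $c_L>0$ such that every ball $B(x,R)$ needs at least $c_L(R/r)^t$ balls of radius $r$ to be covered, for all $0<r<R<\diam(X)$. I would first record the crucial \emph{branching estimate}: each cube $Q_{k,j}$ has at least $N_0:=c_L(c/C)^t\roo^{-t}$ children. Indeed, the children of $Q_{k,j}$ cover the inscribed ball $B(x_{k,j},c\roo^k)\subset Q_{k,j}$, and each child is contained in a ball of radius $C\roo^{k+1}$; the lower-dimension bound applied with $R=c\roo^k$ and $r=C\roo^{k+1}$ then forces at least $c_L(c\roo^k/(C\roo^{k+1}))^t=N_0$ of them (this uses $C\roo<c$, which holds once $\roo$ is small, since $c\to\tfrac12$ and $C\to1$).

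Given the branching estimate, I would define $\mu$ on the cube filtration by declaring $\mu(Q_{k+1,i})=\mu(Q_{k,j})/\#\{\text{children of }Q_{k,j}\}$ and normalising the top scale; in the unbounded case one anchors the masses at the distinguished point $x_0$ of Proposition~\ref{prop:dyadic} and builds them outward, obtaining a consistent family that extends to a Borel measure by the standard Carath\'eodory argument of~\cite{KaenmakiRajalaSuomala2012a} (completeness of $X$ guarantees $\sigma$-additivity, and finite doubling that bounded sets have finite measure). Every cube receives positive mass, so $\mu$ is fully supported. The construction gives the \emph{descent inequality} $\mu(Q_{k+1,i})\le N_0^{-1}\mu(Q_{k,j})$, hence $\mu(Q_{m,i})\le\lambda^{m-k}\mu(Q_{k,j})$ whenever $Q_{m,i}\subset Q_{k,j}$, where $\lambda:=N_0^{-1}=c_L^{-1}(C/c)^t\roo^{t}$. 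Writing $\lambda=\roo^{\,s}$ with $s=t-\beta$ and $\beta=\log\bigl((C/c)^t/c_L\bigr)/\log(1/\roo)$, and noting that $c_L$ depends only on $t$ while $C/c\to2$ as $\roo\to0$, I can make $\beta<\eps/2$ by shrinking $\roo$; in particular $s>\diml(X)-\eps$.

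To read off $\ldimreg(\mu)\ge s$ I must verify $\mu(B(x,r))\le C'(r/R)^{s}\mu(B(x,R))$ for all $0<r<R<\diam(X)$. The lower bound on the denominator is easy: choosing $k$ maximal with $2C\roo^k\le R$ (so $\roo^k\approx R$), the cube $Q_{k}(x)\ni x$ has diameter at most $2C\roo^k\le R$, so $Q_k(x)\subset B(x,R)$ and $\mu(B(x,R))\ge\mu(Q_k(x))$. Combined with the descent inequality, it then suffices to prove the upper bound $\mu(B(x,r))\lesssim\mu(Q_m(x))$, where $\roo^m\approx r$ and $Q_m(x)\subset Q_k(x)$ is the cube of $x$ at level $m$; indeed this yields $\mu(B(x,r))\lesssim\lambda^{m-k}\mu(Q_k(x))\lesssim(r/R)^{s}\mu(B(x,R))$.

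The upper bound is the main obstacle, and it is exactly here that passing from the doubling to the finite doubling hypothesis bites. The difficulty is that $x$ may lie in the boundary annulus $B(x_{m,i},C\roo^m)\setminus B(x_{m,i},c\roo^m)$ of its cube at every scale, so $B(x,r)$ need not be contained in any single cube and may meet several level-$m$ cubes; under finite doubling the number of such neighbouring cubes is not bounded uniformly in $x$ and $r$, so one cannot simply sum. The mechanism I would use to get around this is that even distribution controls not the number of neighbours but the \emph{fraction} of mass each cube passes down: for the sums $\Phi(l):=\sum\{\mu(Q_{l,i}):Q_{l,i}\cap B(x,r)\ne\emptyset\}$ one has $\mu(B(x,r))\le\Phi(l)$ and $\Phi(l+1)=\sum_{Q_{l,i}\cap B(x,r)\ne\emptyset}\frac{\#\{\text{children of }Q_{l,i}\text{ meeting }B(x,r)\}}{\#\{\text{children of }Q_{l,i}\}}\,\mu(Q_{l,i})$. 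At the scales $\roo^l\gg r$ a meeting cube passes mass only to the few children lying within $r$ of $x$, while it has at least $N_0\approx\roo^{-t}$ children in total, so each such level contracts $\Phi$ by a factor $\lesssim\roo^{t}$; only the boundedly many scales with $\roo^l\approx r$ fail to contract, contributing a harmless constant. Iterating this contraction from level $k$ to level $m$ is what produces $\mu(B(x,r))\lesssim\roo^{s(m-k)}\mu(Q_k(x))\approx(r/R)^s\mu(B(x,R))$, and making the per-level fraction genuinely comparable to $\roo^{t}$ uniformly in $x$ — that is, showing the children meeting $B(x,r)$ never exhaust more than a controlled proportion of all children of a given parent — is the delicate point on which the whole argument turns.
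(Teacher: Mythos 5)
Your construction differs from the paper's in one essential respect: you distribute the mass of each cube \emph{evenly} among its children, whereas the paper distributes it very unevenly, and this difference is fatal at exactly the step you yourself single out as delicate. For your upper bound on $\mu(B(x,r))$ you need that, at each level $l$ with $\roo^l\gg r$, the children of a cube $Q_{l,i}$ meeting $B(x,r)$ form at most (roughly) a $\roo^t$-fraction of \emph{all} children of $Q_{l,i}$. Under the finite doubling property alone there is no such bound. Consider a point $x$ at which the space branches ever more violently as the scale decreases: at scale $\roo^{m}$ there are $K_m$ disjoint arcs of length about $\roo^m$ accumulating at $x$, with $K_{m+1}/K_m\gg\roo^{-1}$. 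Such a space can be arranged to be complete, uniformly perfect and finitely doubling with $\diml\ge 1$, yet the level-$(m{+}1)$ cubes meeting $B(x,r)$ are the $K_{m+1}$ innermost arc-cubes, which form a fraction $(K_{m+1}/K_m)/(\roo^{-1}+K_{m+1}/K_m)\to 1$ of the children of their parents. With even distribution your quantity $\Phi$ then satisfies $\Phi(m+1)\ge(1-\roo^{2m})\Phi(m)$, say; the infinite product converges, so $\mu(B(x,r))\approx\mu(B(x,R))$ for all $r<R$ and $\ldimreg(\mu)=0$. The contraction you hope for fails at \emph{every} level, not only at the boundedly many levels with $\roo^l\approx r$, and this cannot be repaired within the even-distribution scheme.

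The paper's proof is designed precisely to avoid this. Inside each cube $Q_{k,j}$ it selects $K=\lceil\roo^{-t}\rceil$ ``central'' children, namely ones meeting the small ball $B(x_{k,j},\lambda\roo^k)$ (their existence is your branching estimate, run with an auxiliary exponent $s\in(t,\diml(X))$ to absorb the constants), gives each of these mass $\bigl(\tfrac{1}{K+1}+\eps_{k,j}\bigr)\mu(Q_{k,j})$, and gives every remaining child only $\eps_{k,j}\mu(Q_{k,j})$ with $\eps_{k,j}=((K+1)\# N_{k+1}^j)^{-1}$. The point is that the non-central children then carry at most a $1/(K+1)\le\roo^t$ fraction of the parent's mass \emph{collectively, no matter how many of them there are}. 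This yields a dichotomy that replaces your fractional-counting step: either $B(x,r)$ meets only non-central children at every relevant level, in which case the total mass of meeting cubes contracts by $\roo^t$ per level with no packing count needed; or at some largest level $k$ it meets a central child, and then a short triangle-inequality argument (estimates \eqref{eq:est2}--\eqref{eq:kolmio2}) shows that $B(x,r)$ is trapped inside the single cube $Q_{k,j}$, after which the single-chain descent $\mu(Q_{k+1,i})\le\roo^t\mu(Q_{k,j})$ finishes the estimate. The remaining ingredients of your outline (the cube structure, anchoring at $x_0$, the lower bound $\mu(B(x,R))\ge\mu(Q_k(x))$) are consistent with the paper, but the key mechanism must be the central/annulus splitting rather than even division.
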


\begin{proof}
  Fix $0<t<s<\diml(X)$. To show the claim it suffices to construct a measure $\mu$ supported on $X$ such that $\mu$ satisfies \eqref{eq:lower_reg} with $t$. The idea of the construction is to distribute mass along the ``dyadic cubes'' of Proposition~\ref{prop:dyadic} so that the center part of the cube always gets the biggest portion in a suitable way.
  
  Relying on $s<\diml(X)$, let $c_0>0$ be such that at least $c_0(r/R)^{-s}$ balls of radius $r>0$ are needed to cover a ball of radius $R>r$. Fix $\lambda = 1/8$. Given $0<\roo<\tfrac13$, let $c = \frac12-\frac{\roo}{1-\roo}$ and $C = \frac{1}{1-\roo}$. Note that if $0<\roo<\tfrac15$, then $\tfrac14 < c < \tfrac12$ and $1 < C < \tfrac 5 4 < 2$, and so in particular $C\roo < 1$. 
  Moreover, if $\roo < \sqrt{2}-\tfrac 5 4 < \sqrt{C^2+1}-C$, then
  \begin{equation} \label{eq:est3}
    \roo^2 + 2C\roo < 1,
  \end{equation} 
  and if $\roo < 1/64$, then
  \begin{equation} \label{eq:est2}
    (2C+4)\roo + \lambda < 8\roo + \tfrac18 < \tfrac14 < c.
  \end{equation} 
  Furthermore, if $\roo < (c_02^{-s}\lambda^s)^{1/(s-t)}$, then
  \begin{equation} \label{eq:est1}
    c_0 C^{-s}\lambda^s \roo^{-s} \ge c_0 2^{-s}\lambda^s \roo^{-s} > \roo^{-t}.
  \end{equation}
  Thus, we may fix $\roo>0$ so that all of estimates~\eqref{eq:est3}, \eqref{eq:est2}, and~\eqref{eq:est1} hold. For this $\roo$, let $\QQ = \{ Q_{k,j} : k \in \Z \text{ and } j \in N_k \}$ be as in Proposition~\ref{prop:dyadic}.

  Fix $k \in \Z$ and $j \in N_k$. Let $x \in X$ and $r>0$ and define
  \begin{align*}
    N_{k+1}^j &= \{ i \in N_{k+1} : Q_{k+1,i} \subset Q_{k,j} \}, \\
    Q_k(x,r) &= \{ j \in N_k : Q_{k,j} \cap B(x,r) \ne \emptyset \}.
  \end{align*}
  By Proposition~\ref{prop:dyadic}(3), the assumption $s < \diml(X)$ and~\eqref{eq:est1} imply
  \begin{equation*}
  \# Q_{k+1}(x_{k,j},\lambda\roo^k) \ge c_0\biggl( \frac{C\roo^{k+1}}{\lambda\roo^k} \biggr)^{-s} \ge c_0 C^{-s}\lambda^s \roo^{-s} \ge \roo^{-t}. 
  \end{equation*}
  Since $X$ satisfies the finite doubling property, we see in a similar way that $\# N_{k+1}^j < \infty$ for all $k \in \Z$ and $j \in N_k$.
  
  Let $K = \lceil \roo^{-t} \rceil$ and choose $N_{k+1}^j(\lambda) \subset Q_{k+1}(x_{k,j},\lambda\roo^k)$ such that $\# N_{k+1}^j(\lambda) = K$. Observe that since $\lambda < c$ we have $N_{k+1}^j(\lambda) \subset N_{k+1}^j$. Let $\eps_{k,j} = ((K+1)\# N_{k+1}^j)^{-1}$ and define a set function $\mu_0 \colon \QQ \to [0,\infty)$ as follows: First fix $i_0 \in N_0$ and set $\mu_0(Q_{0,i_0})=1$ and then require that
  \begin{equation} \label{eq:mu0-def}
    \mu_0(Q_{k+1,i}) =
    \begin{cases}
      \bigl(\frac{1}{K+1}+\eps_{k,j}\bigr) \mu_0(Q_{k,j}), &\text{if } i \in N_{k+1}^j(\lambda) \\
      \eps_{k,j} \mu_0(Q_{k,j}), &\text{if } i \in N_{k+1}^j \setminus N_{k+1}^j(\lambda)
    \end{cases}
  \end{equation}
  for all $k \in \Z$ and $i \in N_{k+1}$. Since $\# N_{k+1}^j \ge \# N_{k+1}^j(\lambda) = K$ we have $\eps_{k,j} \le ((K+1)K)^{-1}$ and thus
  \begin{equation} \label{eq:mu0-arvio}
    \mu_0(Q_{k+1,i}) \le \roo^t \mu_0(Q_{k,j})
  \end{equation}
  for all $k \in \Z$ and $i \in N_{k+1}$.

  The measure $\mu$ is now defined using the Carath\'eodory construction for the set function $\mu_0$: For each $\delta>0$ and $A \subset X$ first define
  \begin{equation*}
    \mu_\delta(A) = \inf\biggl\{ \sum_{i=1}^k \mu_0(Q_i) : A \subset \bigcup_{i=1}^k Q_i, \text{ where } Q_i \in \QQ \text{ such that } \diam(Q_i) \le \delta \biggr\}
  \end{equation*}
  and then let $\mu(A) = \sup_{\delta>0} \mu_\delta(A)$. Since
  \begin{equation*}
    \mu_0(Q_{k,j}) = \sum_{i \in N_{k+1}^j} \mu_0(Q_{k+1,i})
  \end{equation*}
  for all $k \in \Z$ and $j \in N_k$, the Borel regular outer measure $\mu$ satisfies $\mu(A) = \mu_\delta(A)$ for all $\delta>0$ and $\mu(Q_{k,j}) = \mu_0(Q_{k,j})$ for all $k \in \Z$ and $j \in N_k$.
  
  Let $x \in X$ and $0<r<R$ and take $n,N \in \Z$ such that $\roo^{n-1} \le R < \roo^{n-2}$ and $\roo^N \le r < \roo^{N-1}$. Since the measure $\mu$ is clearly supported on $X$, it suffices to show that
  \begin{equation*}
    \mu(B(x,r)) \le \tilde{C} \roo^{(N-n+2)t} \mu(B(x,R))
  \end{equation*}
  for some constant $\tilde{C}>0$, independent of $x$ and $R$. We may assume that $n<N-2$, since otherwise the above estimate holds with $\tilde{C}=\roo^{-4t}$.
  Let us first consider the case where
  \begin{equation} \label{eq:pienta-jakoon}
    Q_{k+1}(x,r) \subset \bigcup_{j \in N_k} N_{k+1}^j \setminus N_{k+1}^j(\lambda)
  \end{equation}
  for all $k \in \{ n,\ldots,N-2 \}$. For a fixed number $k$ this means that each $j \in Q_k(x,r)$ is such that $\mu(Q_{k+1,i}) = \eps_{k,j} \mu(Q_{k,j})$ for all $i \in N_{k+1}^j \cap Q_{k+1}(x,r)$. Since a cube $Q_{k,j}$ has at most $\# N_{k+1}^j$ subcubes, we have
  \begin{equation} \label{eq:pienta-arvio}
    \sum_{i \in Q_{k+1}(x,r)} \mu(Q_{k+1,i}) \le \sum_{j \in Q_k(x,r)} \# N_{k+1}^j \eps_{k,j} \mu(Q_{k,j}) \le \roo^t \sum_{j \in Q_k(x,r)} \mu(Q_{k,j})
  \end{equation}
  by the choice of $K$. Therefore, multiple applications of \eqref{eq:pienta-arvio} and Proposition~\ref{prop:dyadic}(3) imply
  \begin{equation} \label{eq:pienta-koko-ajan-arvio}
  \begin{split}
    \mu(B(x,r)) &\le \sum_{i \in Q_{N-2}(x,r)} \mu(Q_{N-2},i) \le \roo^{t(N-2-n)} \sum_{i \in Q_n(x,r)} \mu(Q_{n,i}) \\ &\le \roo^{t(N-2-n)} \mu(B(x,r+ 2C \roo^n)) \le \roo^{t(N-2-n)} \mu(B(x,R))
  \end{split}
  \end{equation}
  since $r+ 2C \roo^n < \roo^{n-1}(\roo^2+2C\roo) < \roo^{n-1} \le R$ by~\eqref{eq:est3} and the fact that $r<\roo^{N-1}\le \roo^{n+1}$.

  Let us then assume that \eqref{eq:pienta-jakoon} does not hold for some number in $\{ n,\ldots,N-2 \}$, and let $k$ be the largest such number in $\{ n,\ldots,N-2 \}$. This means that~\eqref{eq:pienta-jakoon} is satisfied for all numbers in $\{ k+1,\ldots,N-2 \}$ and
  \begin{equation*}
    \mu(Q_{k+1},i) = \Bigl( \frac{1}{K+1} + \eps_{k,j} \Bigr) \mu(Q_{k,j})
  \end{equation*}
  for some $j \in Q_k(x,r)$ and $i \in N_{k+1}^j(\lambda) \cap Q_{k+1}(x,r)$. We will show that now
  \begin{equation} \label{eq:isoa-jakoon}
    Q_{N,h} \subset Q_{k,j}
  \end{equation}
  for all $h \in Q_N(x,r)$, and for this it suffices that $x_{N,h}\in Q_{k,j}$. Fix $h \in Q_N(x,r)$ and 
  let $i_0 \in Q_N(x,r)$ be such that $Q_{N,i_0} \subset Q_{k+1,i}$. Then
  \begin{equation} \label{eq:kolmio1}
    |x_{N,h}-x_{N,i_0}| \le |x_{N,h}-x| + |x-x_{N,i_0}| \le 2(\roo^{N-1} + C\roo^N) \le 4\roo^{N-1}
  \end{equation}
  by Proposition~\ref{prop:dyadic}(3), the choice of $N$, and the fact that $C\roo < 1$. Since $i \in N_{k+1}^j(\lambda)$ and $Q_{N,i_0} \subset Q_{k+1,i}$, we have
  \begin{equation} \label{eq:kolmio2}
    |x_{N,i_0}-x_{k,j}| \le |x_{N,i_0}-x_{k+1,i}| + |x_{k+1,i}-x_{k,j}| \le C\roo^{k+1} + \lambda\roo^k + C\roo^{k+1}.
  \end{equation}
  Thus, putting \eqref{eq:kolmio1} and \eqref{eq:kolmio2} together and recalling that $k \le N-2$, estimate \eqref{eq:est2} gives
  \begin{equation*}
    |x_{N,h}-x_{k,j}| \le (2C+4)\roo^{k+1} + \lambda\roo^k < c\roo^k
  \end{equation*}
  which shows that \eqref{eq:isoa-jakoon} holds. Now, if $h \in N_n$ is such that $Q_{k,j} \subset Q_{n,h}$, then \eqref{eq:pienta-arvio}, \eqref{eq:isoa-jakoon}, \eqref{eq:mu0-arvio}, Proposition~\ref{prop:dyadic}(3), and \eqref{eq:est3}
  (as in~\eqref{eq:pienta-koko-ajan-arvio}) imply
  \begin{align*}
    \mu(B(x,r)) &\le \sum_{i \in Q_{N-1}(x,r)} \mu(Q_{N-1},i) \le \roo^{t(N-2-k)} \sum_{i \in Q_{k+1}(x,r)} \mu(Q_{k+1},i) \\
    &\le \roo^{t(N-2-k)} \mu(Q_{k,j}) \le \roo^{t(N-2-k)} \roo^{t(k-n)} \mu(Q_{n,h}) \\ 
    &\le \roo^{t(N-2-n)} \mu(B(x,r+ 2C \roo^n)) \le \roo^{t(N-2-n)} \mu(B(x,R)),
  \end{align*}
  showing that indeed $\ldimreg(\mu) \ge t$.
\end{proof}

If $X$ is a complete metric space, then it follows from~\cite[Theorem~1]{LuukkainenSaksman1998} that $X$ is doubling if and only if there exists a doubling measure $\mu$ supported on $X$. The corresponding consequence of Theorem~\ref{thm:finite-condition} is the following corollary. Its proof follows immediately from Theorem~\ref{thm:finite-condition} by recalling that, by \cite[Lemma~2.1]{KaenmakiLehrbackVuorinen2013}, $X$ is uniformly perfect if and only if $\diml(X)>0$, and that $\ldimreg(\mu)>0$ if and only if $\mu$ is reverse-doubling and supported on $X$.

\begin{corollary}
  Suppose that $X$ is a complete metric space satisfying the finite doubling property. Then $X$ is uniformly perfect if and only if there exists a reverse-doubling measure $\mu$ supported on $X$.
\end{corollary}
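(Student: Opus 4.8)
The plan is to deduce the equivalence directly from Theorem~\ref{thm:finite-condition} together with the two characterizations recorded in \S\ref{sect:metric}: by \cite[Lemma~2.1]{KaenmakiLehrbackVuorinen2013} the space $X$ is uniformly perfect precisely when $\diml(X)>0$, and a measure $\mu$ supported on $X$ is reverse-doubling precisely when $\ldimreg(\mu)>0$. Thus the task reduces to showing that $\diml(X)>0$ holds if and only if $X$ carries a measure $\mu$ with $\ldimreg(\mu)>0$.

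For the forward implication, suppose $X$ is uniformly perfect, so that $\diml(X)>0$. Since $X$ is complete and satisfies the finite doubling property, I would apply Theorem~\ref{thm:finite-condition} with, say, $\eps=\diml(X)/2$ to obtain a measure $\mu$ supported on $X$ with
\[
  \ldimreg(\mu) > \diml(X)-\eps = \tfrac12\diml(X) > 0.
\]
By the equivalence above, this $\mu$ is reverse-doubling, which proves this direction.

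For the converse, suppose $X$ supports a reverse-doubling measure $\mu$, so that $\ldimreg(\mu)>0$. The inequality $\ldimreg(\mu)\le\diml(X)$, which follows immediately from the defining estimate~\eqref{eq:lower_reg}, then forces $\diml(X)>0$, whence $X$ is uniformly perfect. Note that this direction uses neither completeness nor the finite doubling property; both hypotheses enter only through the existence result of Theorem~\ref{thm:finite-condition} in the forward implication.

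There is in fact no genuine obstacle here: all the substance has already been absorbed into Theorem~\ref{thm:finite-condition}, and what remains is the bookkeeping of translating between uniform perfectness, positivity of $\diml(X)$, positivity of $\ldimreg(\mu)$, and the reverse-doubling condition. The only point requiring a moment's care is to choose $\eps$ strictly below $\diml(X)$ so that the measure produced by the theorem has strictly positive lower regularity dimension.
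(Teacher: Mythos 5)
Your argument is correct and is essentially the paper's own proof: both directions reduce to Theorem~\ref{thm:finite-condition} via the equivalences ``uniformly perfect $\Leftrightarrow$ $\diml(X)>0$'' and ``reverse-doubling $\Leftrightarrow$ $\ldimreg(\mu)>0$'', together with the trivial bound $\ldimreg(\mu)\le\diml(X)$ for the converse. No issues.
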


We shall next show that a slight modification of the proof of Theorem~\ref{thm:finite-condition} gives an alternative proof for the ``lower'' part of \cite[Theorem~9]{BylundGudayol2000}.

\begin{theorem} \label{thm:doubling}
  Suppose that $X$ is a uniformly perfect complete doubling metric space. Then for each $\eps>0$ there exists a doubling measure $\mu$ supported on $X$ such that $\ldimreg(\mu) > \diml(\mu)-\eps$.
\end{theorem}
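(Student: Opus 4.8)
The plan is to repeat the construction in the proof of Theorem~\ref{thm:finite-condition} without change: for fixed $0<t<s<\diml(X)$ we obtain the same family $\QQ=\{Q_{k,j}\}$ from Proposition~\ref{prop:dyadic} and the same measure $\mu$, for which $\ldimreg(\mu)\ge t$ was already shown. Choosing $t$ close to $\diml(X)$ then gives $\ldimreg(\mu)>\diml(X)-\eps$, so the only thing left to verify is that this particular $\mu$ is doubling. Since for a measure the doubling property is equivalent to $\udimreg(\mu)<\infty$ (iterate the defining inequality with $R=2r$, exactly as recalled in the introduction), it suffices to exhibit a finite exponent $s'$ and a constant $c>0$ with $\mu(B(x,r))\ge c(r/R)^{s'}\mu(B(x,R))$ for all $x\in X$ and $0<r<R<\diam(X)$.

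The single place where I would upgrade the hypothesis from finite doubling to genuine doubling is the cardinality of the children sets $N_{k+1}^j$. First I would note that now $\#N_{k+1}^j\le M$ for a constant $M$ depending only on $\roo$ and the doubling constant of $X$: by Proposition~\ref{prop:dyadic}(2)--(3) the inner balls $B(x_{k+1,i},c\roo^{k+1})$, $i\in N_{k+1}^j$, are pairwise disjoint and contained in $B(x_{k,j},2C\roo^k)$, so a standard packing estimate in a doubling space bounds their number. Consequently $\eps_{k,j}=((K+1)\#N_{k+1}^j)^{-1}\ge((K+1)M)^{-1}=:\eps_0>0$ holds uniformly, and \eqref{eq:mu0-def} upgrades \eqref{eq:mu0-arvio} to a two-sided estimate: every child obeys $\eps_0\,\mu(Q_{k,j})\le\mu(Q_{k+1,i})\le\roo^t\mu(Q_{k,j})$. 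Iterating the lower inequality yields $\mu(Q_{m,i})\ge\eps_0^{\,m-k}\mu(Q_{k,j})$ whenever $Q_{m,i}\subset Q_{k,j}$, which is precisely the reverse of the mass estimate that drove the proof of Theorem~\ref{thm:finite-condition}.

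With this in hand I would run the ball estimate of Theorem~\ref{thm:finite-condition} in the opposite direction. Taking $n,N\in\Z$ with $\roo^n\approx R$ and $\roo^N\approx r$ as before, the ball $B(x,r)$ contains some cube $Q_{N',h}$ of level $N'=N+O(1)$ by Proposition~\ref{prop:dyadic}(3), while the bounded overlap of the cube family (now available since $X$ is doubling) gives $\mu(B(x,R))\le\sum_{j\in Q_n(x,R)}\mu(Q_{n,j})$ with at most $L$ nonzero terms, $L$ depending only on $\roo$ and the doubling constant. Writing $Q_{n,h'}$ for the level-$n$ ancestor of $Q_{N',h}$ and using the iterated lower bound, $\mu(B(x,r))\ge\mu(Q_{N',h})\ge\eps_0^{\,N'-n}\mu(Q_{n,h'})$, and since $\eps_0^{\,N'-n}=\roo^{(N'-n)s'}$ with $s'=\log\eps_0/\log\roo<\infty$, the sought lower regularity bound follows once one knows that $\mu(Q_{n,h'})$ is comparable, up to a uniform constant, to each $\mu(Q_{n,j})$, $j\in Q_n(x,R)$, occurring in the upper bound.

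This comparability is where I expect the main difficulty to lie. The cubes $Q_{n,j}$, $j\in Q_n(x,R)$, cluster in a ball of radius $\approx\roo^n$, yet because of boundary effects they need not share a common ancestor only boundedly many generations above level $n$; without such control the two-sided ratios $\eps_0\le\mu(\mathrm{child})/\mu(\mathrm{parent})\le\roo^t$ do not by themselves make their masses comparable, and two spatially close cubes of the same generation could a priori carry very different measure. Resolving this is the heart of the proof, and it is exactly here that the genuine doubling of $X$ (and not merely the finite doubling property) must be used: one needs to bound the number of generations separating neighbouring same-generation cubes from a common ancestor, after which the two-sided ratios yield $\mu(Q_{n,j})\approx\mu(Q_{n,h'})$ with a uniform constant depending only on $\roo$, $\eps_0$, $L$, and the doubling constant. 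Granting this uniform comparability of neighbouring cubes, the displayed estimates give $\udimreg(\mu)<\infty$, whence $\mu$ is doubling, completing the proof.
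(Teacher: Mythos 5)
You have correctly located the crux of the matter --- the comparability of the masses of spatially close cubes of the same generation --- but your proposed resolution does not work, and the unmodified construction you start from is not the right one. First, the number of generations separating two neighbouring same-generation cubes from their first common ancestor is \emph{not} bounded in a doubling space: already for the standard dyadic intervals in $\R$, the level-$n$ intervals containing $\tfrac12-\delta$ and $\tfrac12+\delta$ (with $2^{-n}\approx\delta$) are adjacent yet meet only in the ancestor $[0,1]$, so the gap to the common ancestor is $n$ and tends to infinity. Hence doubling of $X$ cannot deliver the bounded-ancestry statement on which your comparability argument rests. Second, once the common ancestor may sit unboundedly many generations above, the two-sided bounds $\eps_0\le\mu(Q_{k+1,i})/\mu(Q_{k,j})\le\roo^t$ available in the unmodified construction accumulate multiplicatively and give a comparability constant of order $(\roo^t/\eps_0)^{N-k_0}$, which blows up; since $\eps_{k,j}=((K+1)\#N_{k+1}^j)^{-1}$ genuinely varies with $j$, two long disjoint chains of annulus cubes from a common ancestor can carry very different mass products. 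This is exactly why the paper modifies the construction: the mass fraction given to the cubes in $N_{k+1}^j\setminus N_{k+1}^j(\lambda)$ is replaced by a \emph{universal} constant $\eps=\frac1{(K+1)M}$ independent of $k$ and $j$, and only the fractions $\tilde\eps_{k,j}$ for the $K$ central cubes are adjusted so that mass is conserved.

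With this modification the comparability is obtained without any bound on the depth of the common ancestor. Let $k_0$ be the largest level at which all cubes of $Q_{N+1}(x,2r)$ lie in a single cube $Q_{k_0,j}$. The geometric argument from the proof of Theorem~\ref{thm:finite-condition} (the one establishing \eqref{eq:isoa-jakoon}) forces \eqref{eq:pienta-jakoon} to hold for every $k\in\{k_0+1,\dots,N-2\}$: if some relevant cube at such a level were a central cube, all the small cubes would already be contained in its parent, contradicting the maximality of $k_0$. Consequently every chain from level $k_0+1$ down to level $N-2$ passes only through annulus cubes and carries the \emph{same} ratio $\eps$ at every step, so the products $\eps^{N-k_0-3}$ cancel exactly between any two cubes of $Q_{N+1}(x,2r)$, and only the boundedly many remaining generations contribute, yielding the uniform constant $\roo^{4t}\eps^{-4}$ of \eqref{eq:vertaa}; the doubling of $\mu$ then follows along the lines you outline. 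So your high-level plan (reverse the ball estimate and reduce to comparability of neighbouring cubes) is sound, but the missing idea is that comparability must come from exact cancellation of a uniform ratio along the shared stretch of ancestry, not from a bound on its length.
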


\begin{proof}
We will show that the measure $\mu$ in Theorem \ref{thm:finite-condition} can be defined to be doubling if the space $X$ is doubling. Unfortunately the proof of Theorem \ref{thm:finite-condition} does not seem to give this directly. From the technical point of view, to show that the measure $\mu$ is doubling, we cannot allow $\eps_{k,j}$ in the annulus region of \eqref{eq:mu0-def} to depend on $k$ and $j$. We use the notation introduced in the proof of Theorem \ref{thm:finite-condition}.

If $X$ is doubling, then there exists $M\in\N$ such that $N^j_{k+1}\le M$ for all $k\in\Z$ and $j\in N_k$. We make the following small change to the definition of the measure $\mu$: Take $\eps=\frac 1 {(K+1)M}$, and define the pre-measure $\mu_0$ by first setting $\mu_0(Q_{0,i_0})=1$ for a fixed $i_0 \in N_0$ and then requiring that
 \begin{equation*}
   \mu_0(Q_{k+1,i}) =
   \begin{cases}
     \bigl(\frac{1}{K+1}+\tilde\eps_{k,j}\bigr) \mu_0(Q_{k,j}), &\text{if } i \in N_{k+1}^j(\lambda) \\
     \eps \mu_0(Q_{k,j}), &\text{if } i \in N_{k+1}^j \setminus N_{k+1}^j(\lambda)
   \end{cases}
 \end{equation*}
 for all $k \in \Z$ and $i \in N_{k+1}$, where the numbers $\tilde\eps_{k,j}$ are now chosen in such a way that 
 \begin{equation*}
   \mu_0(Q_{k,j}) = \sum_{i \in N_{k+1}^j} \mu_0(Q_{k+1,i})
 \end{equation*}
 for all $k \in \Z$ and $j \in N_k$. Then in particular $0<\tilde\eps_{k,j}\le \frac 1 {K(K+1)}$ 
 and
 \begin{equation} \label{eq:mu0-arvio-tupl}
   \eps \mu_0(Q_{k,j}) \le \mu_0(Q_{k+1,i}) \le \roo^t \mu_0(Q_{k,j})
 \end{equation}
 for all $k \in \Z$ and $i \in N_{k+1}$. The measure $\mu$ is then defined just as above, and the rest of the proof works as such, showing that $\mu$ satisfies~\eqref{eq:lower_reg} with $t$, and so we only need to show that $\mu$ is doubling.

 Fix $x\in X$ and $r>0$. Let $N\in\Z$ be such that $\roo^{N}\le r<\roo^{N-1}$ and let $k_0\le N-2$ be the largest integer for which there exists $j\in N_{k_0}$ such that $Q_{N+1,h}\subset Q_{k_0,j}$ for all $h\in Q_{N+1}(x,2r)$. Notice that such $k_0\in\Z$ exists by Proposition~\ref{prop:dyadic}(4). 
 Then~\eqref{eq:pienta-jakoon} holds for all $k\in\{k_0+1,\dots,N-2\}$, since otherwise we would obtain, following the reasoning in the proof of Theorem~\ref{thm:finite-condition} from the case where~\eqref{eq:pienta-jakoon} did not hold for all $k\in\{n,\dots,N-2\}$, some $k\in\{k_0+1,\dots,N-2\}$ such that $Q_{N+1,h}\subset Q_{k,j}$ for all $h\in Q_{N+1}(x,2r)$, contradicting the choice of $k_0$.

 Next we show that the measures of the cubes in $Q_{N+1}(x,2r)$ are always comparable (with uniform constants). To this end, let $i,j\in Q_{N+1}(x,2r)$, and let $i_k, j_k\in Q_{k}(x,r)$, for $k\in \{ k_0,\ldots,N \}$, be such that $Q_{N+1,i}\subset Q_{k,i_k}$ and $Q_{N+1,j}\subset Q_{k,j_k}$. Since~\eqref{eq:pienta-jakoon} holds for all $k\in\{ k_0+1,\ldots,N-2 \}$, for these $k$ we have that $\mu(Q_{k+1,i_{k+1}}) = \eps\mu(Q_{k,i_{k}})$ (and the same for $j$). Using this together with estimate~\eqref{eq:mu0-arvio-tupl} and the fact that $Q_{k_0,i_{k_0}}=Q_{k_0,j_{k_0}}$, we obtain 
 \begin{equation}\label{eq:vertaa}\begin{split}
  \mu(Q_{N+1,i}) & \le \roo^{3t}\mu(Q_{N-2,i_{N-2}}) = \roo^{3t}\eps^{N-k_0-3}\mu(Q_{k_0+1,i_{k_0+1}})
  \le \roo^{4t}\eps^{N-k_0-3}\mu(Q_{k_0,j_{k_0}}) \\ 
  & \le \roo^{4t}\eps^{N-k_0-4}\mu(Q_{k_0+1,j_{k_0+1}})
  =  \roo^{4t}\eps^{-1}\mu(Q_{N-2,j_{N-2}}) \le \roo^{4t}\eps^{-4}\mu(Q_{N+1,j}).
 \end{split}
 \end{equation}

 Finally, choose $i_0\in Q_{N+1}(x,r)$ so that $x\in Q_{N+1,i_0}$. Since $2C\roo<1$ by~\eqref{eq:est3}, we have for all $z\in Q_{N+1,i_0}$ that $|x-z|\le |x-x_{N+1,i_0}|+|x_{N+1,i_0}-z|\le 2C\roo^{N+1}<\roo^{N}$, and thus $Q_{N+1,i_0}\subset B(x,r)$. Since $X$ is doubling, there exists $M_0>0$, independent of $x$ and $r$, such that $\# Q_{N+1}(x,2r)\le M_0$. Using~\eqref{eq:vertaa} with $j=i_0$, we thus obtain that
 \begin{equation*}
  \mu(B(x,r))\ge \mu(Q_{N+1,i_0}) \ge M_0^{-1} \roo^{-4t}\eps^{4} \sum_{i\in Q_{N+1}(x,2r)} \mu(Q_{N+1,i})
  \ge \tilde c\mu(B(x,2r)),
 \end{equation*}
 where $\tilde c = M_0^{-1} \roo^{-4t}\eps^{4}>0$ is independent of $x$ and $r$. This shows that $\mu$ is doubling, and the proof is complete.
\end{proof}

\section{Inhomogeneous self-similar sets}\label{sect:inhomog}

A finite collection $\{ \fii_i \}_{i=1}^\kappa$ of contractive similitudes acting on $\R^d$ is called an \emph{(similitude) iterated function system (IFS)}. Hutchinson \cite{Hutchinson1981} proved that for a given IFS there exists a unique non-empty compact set $E \subset \R^d$ such that
\begin{equation*}
  E = \bigcup_{i=1}^\kappa \fii_i(E).
\end{equation*}
The set $E$ is the \emph{self-similar set} associated to the IFS.
Furthermore, if $C \subset \R^d$ is compact, then there exists a unique non-empty compact set $E_C \subset \R^d$ such that
\begin{equation*}
  E_C = \bigcup_{i=1}^\kappa \fii_i(E_C) \cup C.
\end{equation*}
The set $E_C$ is called the \emph{inhomogeneous self-similar set with condensation $C$}. Such sets were introduced in \cite{BarnsleyDemko1985} and studied in \cite{Barnsley2006}. Observe that $E_\emptyset$ is the self-similar set $E$. It was proved in \cite[Lemma 3.9]{Snigireva2008} that
\begin{equation} \label{eq:inhomog-eq}
  E_C = E \cup \bigcup_{\iii \in \Sigma_*} \fii_\iii(C).
\end{equation}
Here the set $\Sigma_*$ is the set of all finite words $\{ \varnothing \} \cup \bigcup_{n \in \N} \Sigma_n$, where $\Sigma_n = \{ 1,\ldots,\kappa \}^n$ for all $n \in \N$ and $\varnothing$ satisfies $\varnothing\iii = \iii\varnothing = \iii$ for all $\iii \in \Sigma_*$. For notational convenience, we set $\Sigma_0 = \{ \varnothing \}$. The set $\Sigma = \{ 1,\ldots,\kappa \}^\N$ is the set of all infinite words. If $\iii = i_1 \cdots i_n \in \Sigma_*$ for some $n \in \N$, then $\fii_\iii = \fii_{i_1} \cdots \fii_{i_n}$. We define $\fii_\varnothing$ to be the identity mapping. The concatenation of two words $\iii \in \Sigma_*$ and $\jjj \in \Sigma_* \cup \Sigma$ is denoted by $\iii\jjj \in \Sigma_* \cup \Sigma$ and the length of $\iii \in \Sigma_* \cup \Sigma$ is denoted by $|\iii|$. If $\jjj \in \Sigma_* \cup \Sigma$ and $1 \le n < |\jjj|$, then we define $\jjj|_n$ to be the unique word $\iii \in \Sigma_n$ for which $\iii\kkk = \jjj$ for some $\kkk \in \Sigma_*$. We also set $\iii^- = \iii|_{|\iii|-1}$ for all $\iii \in \Sigma_* \setminus \{ \varnothing \}$.

From \eqref{eq:inhomog-eq} we see immediately that
\begin{align*}
  \dimh(E_C) &= \max\{ \dimh(E), \dimh(C) \}, \\
  \dimp(E_C) &= \max\{ \dimp(E), \dimp(C) \},
\end{align*}
where $\dimh$ denotes the Hausdorff dimension and $\dimp$ is the packing dimension. Fraser \cite[Corollaries 2.2 and 2.5]{Fraser2012} showed that if the IFS satisfies the open set condition, then
\begin{align*}
  \udimm(E_C) &= \max\{ \udimm(E), \udimm(C) \}, \\
  \max\{ \ldimm(E), \ldimm(C) \} \le \ldimm(E_C) &\le \max\{ \ldimm(E), \udimm(C) \},
\end{align*}
where $\udimm$ and $\ldimm$ are the upper and lower Minkowski dimensions, respectively. Furthermore, Baker, Fraser, and M\'ath\'e \cite{BakerFraserMathe2017} demonstrated that the equality above may fail without the open set condition.

In the following two theorems, we obtain analogous results for the Assouad and lower dimensions of inhomogeneous self-similar sets. However, unlike in the case of Minkowski dimensions, we need to pose an additional separation condition for the condensation set; Example~\ref{ex:no_separation} below illustrates the necessity of this extra assumption.

We say that the IFS satisfies the \emph{condensation open set condition (COSC) with condensation} $C$, if there exists an open set $U$ such that $C \subset U \setminus \bigcup_i \overline{\fii_i(U)}$,
\begin{equation*}
  \fii_i(U) \subset U \text{ for all $i$} \quad \text{and} \quad \fii_i(U) \cap \fii_j(U) \ne \emptyset \text{ whenever $i \ne j$}.
\end{equation*}
Without the reference to the condensation set $C$, the above is the familiar open set condition. The COSC is a slight modification of the inhomogeneous open set condition introduced in \cite[\S 4.3.1]{Snigireva2008}. It is well known that if the open set condition is satisfied, then the self-similar set $E$ is $s$-regular for $s$ satisfying $\sum_i \Lip(\fii_i)^s = 1$ and hence $\diml(E) = \dima(E) = s$. Here $\Lip(\fii_i)$ is the contraction coefficient of the similitude $\fii_i$.

\begin{theorem}\label{thm:dim of inhomog}
  Let $C \subset \R^d$ be a compact set and let $\{ \fii_i \}$ be a similitude IFS satisfying the COSC with condensation $C$. If $E$ is the associated self-similar set and $E_C$ is the inhomogeneous self-similar set with condensation $C$, then
  \begin{equation*}
    \dima(E_C) = \max\{ \dima(E), \dima(C) \}.
  \end{equation*}
\end{theorem}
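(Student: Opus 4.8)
The plan is to prove the two inequalities separately, with the lower bound being the easy direction and the upper bound carrying the real work. Throughout I use the decomposition $E_C = E \cup \bigcup_{\iii \in \Sigma_*} \fii_\iii(C)$ from~\eqref{eq:inhomog-eq}, and I write $s_E = \dima(E)$, $s_C = \dima(C)$.

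For the lower bound $\dima(E_C) \ge \max\{s_E, s_C\}$, I would use the general principle that Assouad dimension is monotone under taking subsets: since $E \subset E_C$ and $C = \fii_\varnothing(C) \subset E_C$, and since $\dima$ is invariant under similitudes (so $\dima(\fii_\iii(C)) = \dima(C)$), we immediately get $\dima(E_C) \ge \dima(E)$ and $\dima(E_C) \ge \dima(C)$. This direction needs no separation condition.

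The substantive direction is the upper bound $\dima(E_C) \le \max\{s_E, s_C\}$. Fix $s > \max\{s_E, s_C\}$; the goal is to bound, for an arbitrary ball $B(x,R)$ and scale $0 < r < R$, the number of $r$-balls needed to cover $E_C \cap B(x,R)$ by a constant times $(R/r)^s$. The strategy is a scale-decomposition of the union in~\eqref{eq:inhomog-eq}. The COSC lets me organize the pieces $\fii_\iii(C)$ by their diameters: each contributes a copy of $C$ scaled by $\Lip(\fii_\iii) = \prod \Lip(\fii_{i_m})$, and the open set condition built into the COSC guarantees that the images $\fii_\iii(U)$ are disjoint, so at any fixed scale the number of words $\iii$ with $\fii_\iii(C)$ meeting $B(x,R)$ and with diameter comparable to a given value is controlled (this is the standard bounded-overlap packing estimate for self-similar sets under OSC). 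I would split the words $\iii$ into those for which $\diam \fii_\iii(C) \ge r$ and those for which $\diam \fii_\iii(C) < r$. For the large pieces, I cover each $\fii_\iii(C) \cap B(x,R)$ using the $s_C$-regularity-type covering bound for $C$ (giving roughly $(\diam \fii_\iii(C)/r)^{s_C}$ balls each) and sum over the relevant words using the OSC packing count, where the $s > s_E$ bound governs how many words appear at each scale. For the small pieces together with the self-similar part $E$, I exploit that below scale $r$ the entire collection $E \cup \bigcup_{\diam \fii_\iii(C) < r} \fii_\iii(C)$ sits inside a controlled neighborhood of $E$, so a covering of $E \cap B(x,R)$ at scale $r$ (costing $\lesssim (R/r)^{s_E}$ balls, since $s > s_E$ and $E$ is $s_E$-regular under OSC) absorbs them. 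Adding the two contributions and using $s > \max\{s_C, s_E\}$ gives the desired $(R/r)^s$ bound.

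The main obstacle I anticipate is the bookkeeping in the large-pieces sum: one must simultaneously track, for each dyadic band of scales $\roo^{k+1} \le \Lip(\fii_\iii) < \roo^k$ between $r$ and $R$, both the number of contributing words (bounded via OSC and the exponent $s_E$) and the per-word covering cost (bounded via the exponent $s_C$), and check that the geometric series over $k$ converges with the exponent $s$ rather than blowing up. The COSC is exactly what makes this work, because the condition $C \subset U \setminus \bigcup_i \overline{\fii_i(U)}$ prevents the condensation copies from clustering inside one another and thereby inflating the count; without it (as Example~\ref{ex:no_separation} will show) the estimate fails, so I would be careful to invoke the separation precisely where the overlap of the $\fii_\iii(C)$ is controlled. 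A secondary technical point is ensuring the covering bounds for $E$ and for $C$ are applied with a single constant uniform in $x$, $R$, and $r$, which follows from the definition of $\dima$ and the similitude-invariance of the covering numbers, but should be stated carefully.
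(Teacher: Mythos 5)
Your proposal is correct and follows essentially the same route as the paper: monotonicity for the lower bound, and for the upper bound a split of the cover into the part near $E$ (absorbed by an $r$-scale cover of the self-similar set) and the condensation copies organized into scale bands between $r$ and $R$, with the COSC giving bounded overlap, $\# N(\roo)\approx\roo^{-\dima(E)}$ counting the words per band, and a geometric series dominated by the exponent $\max\{\dima(E),\dima(C)\}$. The only detail worth adding is the paper's preliminary case of a single condensation copy $\fii_\iii(C)$ with $d\Lip(\fii_\iii)>R$, for which $E_C\cap B(x,R)=\fii_\iii(C)\cap B(x,R)$ and the bound follows directly from $\dima(C)$.
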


\begin{proof}
  Since the Assouad dimension is monotone, it suffices to show that $$\dima(E_C) \le \max\{ \dima(E), \dima(C) \}.$$ Assuming that $\dima(C) < \dima(E)$, we will show that $\dima(E_C) \le \dima(E)$. Recall that, by the open set condition, $\gamma = \dima(E) = \diml(E)$ satisfies $\sum_i \Lip(\fii_i)^\gamma = 1$. Fix $x \in E_C$ and $0<r<R < \diam(E_C)$, and let $\BB$ be a maximal $r$-packing of $E_C \cap B(x,R)$. It suffices to show that for each $\gamma' > \gamma$ there is a constant $C$ not depending on $x$ such that
  \begin{equation}\label{eq:ylaraja}
    \#\BB \le C\Bigl( \frac{r}{R} \Bigr)^{-\gamma'}
  \end{equation}
  for all $0<r<R$, since then $\dima(E_C) \le \gamma = \dima(E)$.
  
  Let $U$ be the open set from the COSC. Since $C \subset U \setminus \bigcup_i \overline{\fii_i(U)}$ is compact, we have $d = \dist(C,\bigcup_i \overline{\fii_i(U)} \cup (\R^d \setminus U)) > 0$. 
  It then follows from the COSC that 
  \begin{equation}\label{eq:kaukana}
    \dist(\fii_\iii(C),E_C\setminus \fii_\iii(C))\ge d\Lip(\fii_\iii)                                                                                                                                                                                                                                                                            
  \end{equation}
  for all $\iii\in\Sigma_*$. Let $\dima(C) < \lambda < \dima(E)$, and assume first that $\fii_\iii(C)\cap B(x,R)\neq\emptyset$ for some $\iii\in\Sigma_*$ with $R<d\Lip(\fii_\iii)$. Then it is obvious from~\eqref{eq:kaukana} that $E_C \cap B(x,R)=\fii_\iii(C)\cap B(x,R)$. Since $\fii_\iii(C)$ is a dilated copy of $C$ and $\dima(C)<\lambda<\gamma$, it follows that 
  \begin{equation*}
    \#\BB \le C\Bigl( \frac{r}{R} \Bigr)^{-\lambda} \le C\Bigl( \frac{r}{R} \Bigr)^{-\gamma},
  \end{equation*}
  where $C>0$ is independent of $x$, $r$, and $R$. We may hence assume that $\fii_\iii(C)\cap B(x,R) = \emptyset$ for all $\iii\in\Sigma_*$ for which $R<d\Lip(\fii_\iii)$.
  
  Define
  \begin{align*}
    \BB_1 &= \{ B \in \BB : E \cap B = \emptyset \}, \\
    \BB_2 &= \BB \setminus \BB_1 = \{ B \in \BB : E \cap B \ne \emptyset \}.
  \end{align*}
  Observe that if $B \in \BB_1$, then the center of $B$ is in $\bigcup_{\iii \in \Sigma_*} \fii_\iii(C)$. 
  Since the set $\bigcap_{n=1}^\infty \fii_{\iii|_n}(E_C)$ contains exactly one point of $E$ for all $\iii\in\Sigma$ and
  \begin{equation}\label{eq:raja-fii-C}
    \diam(\fii_\iii(C)) \le \diam(\fii_\iii(E_C)) = \Lip(\fii_\iii) \diam(E_C)
  \end{equation}
  for all $\iii \in \Sigma_*$, we see that if $\iii \in \Sigma_*$ is such that $\Lip(\fii_\iii) < r/\diam(E_C)$, then $\fii_\iii(C)$ is contained in the $r$-neighborhood of $E$. Note also that if $B \in \BB$ is such that the center of $B$ is in the $r$-neighborhood of $E$, then $B \in \BB_2$. Writing
  \begin{equation*}
    \BB_1(\iii) = \{ B \in \BB : \text{the center of $B$ is in } \fii_\iii(C) \}
  \end{equation*}
  for all $\iii \in \Sigma_*$, we thus have
  \begin{equation}\label{eq:BB-inclusion}
    \BB_1 \subset \bigcup \{ \BB_1(\iii) : \Lip(\fii_\iii) \ge r/\diam(E_C) \}.
  \end{equation}
  Since $\BB_1(\iii)$ is an $r$-packing of $\fii_\iii(C)$, which is a dilated copy of $C$, it follows from the definition of $\dima$ and estimate~\eqref{eq:raja-fii-C} that
  \begin{equation*}
    \#\BB_1(\iii) \le C\biggl( \frac{r}{\diam(\fii_\iii(C))} \biggr)^{-\lambda} \le C\biggl( \frac{r}{\Lip(\fii_\iii) \diam(E_C)} \biggr)^{-\lambda}
  \end{equation*}
  for all $\iii \in \Sigma_*$, where the constant $C$ is independent of $\iii$.
  
  Let $\ualpha = \max_i \Lip(\fii_i)$ and $\lalpha = \min_i \Lip(\fii_i)$. Choose $m,M\in\Z$ such that 
  $d\lalpha\ualpha^m \le R < d\lalpha\ualpha^{m-1}$
  and $\ualpha^{M}\diam(E_C) \le r < \ualpha^{M-1}\diam(E_C)$,
  and define
  \begin{equation*}
    N(\roo) = \{ \iii \in \Sigma_* : \Lip(\fii_\iii) \le \roo < \Lip(\fii_{\iii^-}) \}
  \end{equation*}
  for all $\roo>0$. Observe that if $\iii \in N(\ualpha^{m-k})$ for some $k \in \N$, then
  \begin{equation*}
    R < d\lalpha \ualpha^{m-1} \le d\lalpha \ualpha^{m-k} < d\lalpha \Lip(\fii_{\iii^-}) \le d \Lip(\fii_\iii),
  \end{equation*}
  and hence $B(x,R)$ does not intersect $\fii_{\iii}(C)$. 
  On the other hand, if $\iii \in N(\ualpha^{M+k})$ for some $k \in \N$, then
  \begin{equation*}
    \Lip(\fii_\iii) \le \ualpha^{M+k} < \ualpha^{M} \le r/\diam(E_C)
  \end{equation*}
  which means that such a word $\iii$ is not used in \eqref{eq:BB-inclusion}. We can thus conclude that the inclusion~\eqref{eq:BB-inclusion} can written as follows:
  \begin{equation*}
    \BB_1 \subset \bigcup_{\ell=m}^M \bigcup_{\iii \in N(\ualpha^\ell)} \BB_1(\iii).
  \end{equation*}
  The role of $\ualpha$ is to just give the slowest contraction speed, so that we get everything in the above union.

  When $\iii \in N(\ualpha^m)$, we have that $\diam(\fii_\iii(U))=\Lip(\fii_\iii)\diam(U)\approx R$, and hence $B(x,R)$ can intersect at most $K_1$ many open sets $\fii_\iii(U)$ with $\iii \in N(\ualpha^m)$. Since $C \subset U \setminus \bigcup_i \overline{\fii_i(U)}$, we thus obtain 
  \begin{equation*}
    \#\BB_1 \le CK_1 \sum_{\ell=m}^M \# N(\ualpha^{\ell-m})\biggl( \frac{r}{\diam(E_C) \ualpha^\ell} \biggr)^{-\lambda} \le CK_1 \sum_{\ell=m}^M \# N(\ualpha^{\ell-m}) \ualpha^{(\ell-M)\lambda}.
  \end{equation*}
  Since $\gamma = \dima(E) = \diml(E)$ and the open set condition is satisfied, we have
  \begin{equation*}
    1 = \sum_{\iii \in N(\roo)} \Lip(\fii_\iii)^\gamma 
    \begin{cases}
      \le \# N(\roo) \roo^\gamma, \\
      > \lalpha^\gamma \# N(\roo) \roo^\gamma,
    \end{cases}
  \end{equation*}
  and hence $\# N(\roo) \approx \roo^{-\gamma}$ for all $\roo>0$. With this we continue:
  \begin{align*}
    \sum_{\ell=0}^{M-m} \# N(\ualpha^\ell) \ualpha^{(\ell+m-M)\lambda} &\lesssim \sum_{\ell=0}^{M-m} \ualpha^{-\ell\gamma} \ualpha^{(\ell+m-M)\lambda} = \ualpha^{-(M-m)\lambda} \sum_{\ell=0}^{M-m} \ualpha^{-(\gamma-\lambda)\ell} \\
    &\le C \ualpha^{-(M-m)\lambda} \ualpha^{-(\gamma-\lambda)(M-m)}\\ 
    & = C \ualpha^{-(M-m)\gamma} \le C \biggl( \frac{r\ualpha \diam(E_C)^{-1}}{R(\lalpha d)^{-1}} \biggr)^{-\gamma}.
  \end{align*}
  Therefore we have $\#\BB_1 \le C\bigl( \frac{r}{R} \bigr)^{-\gamma}$ for some constant $C>0$.
  
  Observe that there exists $K_2\in\N$ such that each $\fii_\iii(E)$, where $\iii \in N(r)$, intersects at most $K_2$ many sets $B \in \BB$. Therefore,
  \begin{equation*}
    \#\BB_2 \le K_2 \cdot \#\{ \iii \in N(r) : \fii_\iii(E) \cap B \neq \emptyset \text{ for some } B \in \BB \}.
  \end{equation*}
  If $\iii \in N(r)$, then $\fii_\iii(E)$ is contained in a ball of radius $r \diam(E)$ centered at any point of $\fii_\iii(E)$. Recall that, by \cite[Corollary 4.8]{KaenmakiVilppolainen2008}, there exist $z \in E$ and $\delta>0$ such that the collection $\{ B(\fii_\iii(z),\delta r) : \iii \in N(r) \}$ of balls is disjoint. Therefore, by \cite[Lemma 2.1(4)]{KaenmakiRajalaSuomala2012}, the collection $\{ B(\fii_\iii(z),r \diam(E)) : \iii \in N(r) \text{ and } \fii_\iii(z)\in E \cap B(x,2R) \}$ can be divided into $K_3$ many disjoint subcollections. Since each such disjoint subcollection is an $(r \diam(E))$-packing of $E \cap B(x,2R)$, its cardinality, for each $\gamma' > \gamma$, is bounded from above by a constant times $\bigl( \frac{r}{R} \bigr)^{-\gamma'}$; recall \cite[Lemma 2.1(3)]{KaenmakiRajalaSuomala2012}. Hence for every $\gamma'>\gamma$ there is a constant $C$ such that
  $\#\BB_2 \le C\bigl( \frac{r}{R} \bigr)^{-\gamma'}$. Since $\#\BB=\#\BB_1 + \#\BB_2$, the claim~\eqref{eq:ylaraja} follows.
  
  Let us then assume that $\gamma = \dima(E) \le \dima(C)$, and let $\lambda > \dima(C)$. We need to show that $\dima(E_C) \le \lambda$. We can follow the reasoning from the above case, but now $\gamma < \lambda$, and so we obtain that
  \begin{align*} 
    \#\BB_1 &\le C\ualpha^{-(M-m) \lambda} \sum_{\ell=0}^{M-m} \ualpha^{-(\gamma-\lambda)\ell} 
    \le C \ualpha^{-(M-m) \lambda} 
    \le C \biggl( \frac{r\ualpha \diam(E_C)^{-1}}{R(\lalpha d)^{-1}} \biggr)^{-\lambda}
  \end{align*}
  for some constant $C>0$.
  Since the reasoning concerning $\BB_2$ works as above, for $\gamma'=\lambda>\gamma$, we get
  \begin{equation*}
    \#\BB = \#\BB_1 + \#\BB_2 \le C\Bigl( \frac{r}{R} \Bigr)^{-\lambda} + C\Bigl( \frac{r}{R} \Bigr)^{-\lambda} \le 2C\Bigl( \frac{r}{R} \Bigr)^{-\lambda}.
  \end{equation*}
  Therefore $\dima(E_C) \le \dima(C) = \max\{ \dima(E), \dima(C) \}$ as claimed.
\end{proof}

\begin{theorem}\label{thm:dim of inhomog2}
  Let $C \subset \R^d$ be a nonempty compact set and let $\{ \fii_i \}$ be a similitude IFS satisfying the COSC with condensation $C$. If $E$ is the associated self-similar set and $E_C$ is the inhomogeneous self-similar set with condensation $C$, then
  \begin{equation*}
    \diml(E_C) = \diml(C).
  \end{equation*}
\end{theorem}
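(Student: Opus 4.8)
The plan is to prove the two inequalities $\diml(E_C)\le\diml(C)$ and $\diml(E_C)\ge\diml(C)$ separately, writing throughout $N(A,r)$ for the least number of balls of radius $r$ needed to cover a set $A$. The one preliminary fact I would isolate is that the lower dimension is governed by small scales: for any fixed $R_0\in(0,\diam(C)]$ one has $\diml(C)=\sup\{t\ge0:\exists c>0\ \forall z\in C\ \forall\,0<\roo<\Theta<R_0,\ N(B(z,\Theta)\cap C,\roo)\ge c(\Theta/\roo)^{t}\}$. Indeed, restricting to $\Theta<R_0$ can only enlarge the admissible set of exponents, while conversely, for $R_0\le\Theta<\diam(C)$, the monotonicity $B(z,\Theta)\supset B(z,R_0)$ and $N(B(z,R_0)\cap C,\roo)\ge c(R_0/\roo)^{t}=c(R_0/\Theta)^{t}(\Theta/\roo)^{t}\ge c(R_0/\diam(C))^{t}(\Theta/\roo)^{t}$ upgrade a small-scale bound to an all-scale bound with a worse constant. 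The separation supplied by the COSC enters through~\eqref{eq:kaukana}, which for $\iii=\varnothing$ reads $\dist(C,E_C\setminus C)\ge d>0$.

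For $\diml(E_C)\le\diml(C)$, fix $s>\diml(C)$ and take $R_0=\min\{d,\diam(C)\}$ in the reduction above. Suppose, for a contradiction, that $\diml(E_C)>s$, so that $N(B(x,R)\cap E_C,r)\ge c(R/r)^{s}$ for some $c>0$ and all $x\in E_C$, $0<r<R<\diam(E_C)$. Since $s$ exceeds the small-scale lower dimension of $C$, for this very $c$ there are $z\in C\subset E_C$ and $0<r^\ast<R^\ast<R_0\le d$ with $N(B(z,R^\ast)\cap C,r^\ast)<c(R^\ast/r^\ast)^{s}$. But $R^\ast<d$ forces $B(z,R^\ast)\cap E_C=B(z,R^\ast)\cap C$ by~\eqref{eq:kaukana}, so the two estimates contradict each other; letting $s\downarrow\diml(C)$ gives the claim.

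For the reverse inequality, fix $t<\diml(C)$ and let $c_C>0$ satisfy $N(B(z,\Theta)\cap C,\roo)\ge c_C(\Theta/\roo)^{t}$ for all $z\in C$ and $0<\roo<\Theta<\diam(C)$; the goal is $N(B(x,R)\cap E_C,r)\gtrsim(R/r)^{t}$ uniformly in $x\in E_C$ and $0<r<R<\diam(E_C)$. The heart of the matter is the geometric claim that there is a constant $a\in(0,1)$ such that, for every $x$ and $R$, at least one of the following holds: (i) $x$ lies in a (by the COSC, unique) copy $\fii_\kkk(C)$ with $R\le\tfrac{d}{2}\Lip(\fii_\kkk)$, and then $B(x,R)\cap E_C=B(x,R)\cap\fii_\kkk(C)$ by~\eqref{eq:kaukana}; or (ii) there is $\iii\in\Sigma_*$ with $\Lip(\fii_\iii)\ge aR$ such that $B(x,R)$ contains the rescaled portion $\fii_\iii(B(z_0,\Theta_0)\cap C)$ for some $z_0\in C$ and a fixed $\Theta_0<\diam(C)$. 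Granting this, the bound follows by rescaling: since $\fii_\iii$ multiplies distances by $\Lip(\fii_\iii)$, in case (ii) we get $N(\fii_\iii(B(z_0,\Theta_0)\cap C),r)=N(B(z_0,\Theta_0)\cap C,r/\Lip(\fii_\iii))\ge c_C(\Theta_0\Lip(\fii_\iii)/r)^{t}\gtrsim(R/r)^{t}$ (the case $r/\Lip(\fii_\iii)\ge\Theta_0$, i.e.\ $r$ comparable to $R$, being trivial from $N\ge1$), and in case (i) the same estimate holds after conjugating the ball by $\fii_\kkk$ and invoking $c_C$ on a fixed portion of $C$.

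To establish the geometric claim I would argue as follows. If $x\in E$, then $x$ has an infinite coding $\hat\iii\in\Sigma$, and truncating at the first level $n$ with $\Lip(\fii_{\hat\iii|_n})\diam(E_C)\le R$ gives $\fii_{\hat\iii|_n}(E_C)\subset B(x,R)$ with $\Lip(\fii_{\hat\iii|_n})\in(\lalpha R/\diam(E_C),R/\diam(E_C)]$, so that $\fii_{\hat\iii|_n}(C)\subset\fii_{\hat\iii|_n}(E_C)$ realizes alternative (ii). If $x\notin E$, then $x$ lies in a unique copy $\fii_\kkk(C)$, and I would split on whether $R\le\tfrac{d}{2}\Lip(\fii_\kkk)$ (alternative (i) directly) or $R>\tfrac{d}{2}\Lip(\fii_\kkk)$; in the latter regime the nearest point $y\in\fii_\kkk(E)\subset E$ obeys $\dist(x,y)\le\Lip(\fii_\kkk)\diam(E_C)$, and comparing with $R$ either places $B(y,R/2)\subset B(x,R)$ and reduces to the case $x\in E$, or pins $\Lip(\fii_\kkk)\approx R$ so that $\fii_\kkk(C)$ (or its part inside $B(x,R)$) furnishes the copy in (ii). The main obstacle here is conceptual rather than computational: because the lower dimension is \emph{not} monotone under inclusion, the containment $E\subset E_C$ gives no information by itself, and one must show that the sparsest local behaviour of $E_C$ is never sparser than that of $C$. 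This is precisely the geometric claim, whose force is that condensation copies $\fii_\iii(C)$ of every scale sit near every point of $E$ (so the possibly low-dimensional set $E$ cannot create regions sparser than $C$), while points buried deep in a single copy see, by the separation~\eqref{eq:kaukana}, nothing but a rescaled $C$. The residual difficulty is bookkeeping at the borderline scales $R\approx\Lip(\fii_\kkk)$, where a ball just escapes the copy containing its centre; these are handled by tracking the fixed constants $d$, $\lalpha$, $\diam(C)$ and $\diam(E_C)$ through the rescalings, all of which are absorbed into the implicit constants.
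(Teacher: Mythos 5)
Your proof is correct, and for the substantive direction $\diml(E_C)\ge\diml(C)$ it follows essentially the same route as the paper: locate a dilated copy (or a fixed-proportion piece) of $C$ at scale comparable to $R$ inside $B(x,R)$, splitting according to whether $x\in E$ (truncate the coding at the first scale $\lesssim R$), whether $x$ sits in a condensation copy that is large relative to $R$ (use that copy directly), or whether the copy is small (jump to a nearby point $x'\in E$ and apply the first case to $B(x',R/2)\subset B(x,R)$); the paper's proof has exactly this three-way structure, merely folding your case (i) and your ``$\Lip(\fii_\kkk)\approx R$'' case into one by applying the covering bound to $\fii_\kkk(C)\cap B(x,R)$ in both regimes. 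Where you genuinely differ is the easy direction $\diml(E_C)\le\diml(C)$: the paper disposes of it in one line by citing Fraser's union formula for the lower dimension (\cite[Theorem~2.2]{Fraser2014}, applicable because $C$ and $\bigcup_i\fii_i(E_C)$ are positively separated), whereas you give a self-contained contradiction argument from \eqref{eq:kaukana} with $\iii=\varnothing$ plus the observation that $\diml$ is determined by scales below any fixed $R_0$. That buys independence from the cited result at the cost of a short localization lemma, and it makes explicit the mechanism (small balls centred in $C$ see only $C$) that also underlies Fraser's theorem. The only places requiring care are the borderline rescalings you already flag: in your case (i) the rescaled radius $R/\Lip(\fii_\kkk)\le d/2$ may exceed $\diam(C)$, and in the ``$\Lip(\fii_\kkk)\approx R$'' case the rescaled radius lies in a fixed interval $[d/2,2\diam(E_C)]$ rather than below $\diam(C)$; both are absorbed by the same bounded-ratio constant loss as in your preliminary reduction, so the constant degrades by a fixed power of $\min\{d,\diam(C)\}/\diam(E_C)$ but the exponent $t$ survives. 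No gap.
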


\begin{proof}
  By the COSC, we have $C \cap \bigcup_i \fii_i(E_C) = \emptyset$. Therefore, by \cite[Theorem 2.2]{Fraser2014}, $\diml(E_C) = \min\{ \diml(E_C), \diml(C) \} \le \diml(C)$. 
  To show the opposite inequality, we may clearly assume that $\diml(C)>0$. Let $0<t<\diml(C)$ and fix $x\in E_C$ and $0<r<R \le \diam(E_C)/2$, and let $\BB$ be a cover of $E_C\cap B(x,R)$ consisting of balls of radius $r$; for $\diam(E_C)/2\le R <\diam(E_C)$ the claim then easily follows from the case $R=\diam(E_C)/2$. Assume first that $x\in E$. Choose $\iii \in \Sigma_*$ such that  $x\in \fii_\iii(E_C)\subset B(x,R)$ but $\fii_{\iii^-}(E_C) \setminus B(x,R) \ne \emptyset$.
  Then $\diam(\fii_{\iii}(C)) = \diam(C)\diam(\fii_{\iii}(E_C))/\diam(E_C) \ge cR$, and since $\fii_{\iii}(C)$ is a dilated copy of $C$ and $\BB$ is in particular a cover of $\fii_{\iii}(C)$, we obtain that
  \begin{equation*}
    \#\BB\ge c\biggl(\frac r {\diam(\fii_\iii(C))}\biggr)^{-t} \ge c\Bigl(\frac r {R}\Bigr)^{-t}, 
  \end{equation*}
  where the constant $c>0$ is independent of $x$, $R$, and $r$. 

  Assume then that $x\in \fii_\iii(C)$ for some $\iii \in \Sigma_*$. The claim is clear if 
  $\diam(\fii_\iii(C))\ge c_1 R$, where $c_1=\diam(C)/(2\diam(E_C))$, since $\BB$ is a cover of $\fii_{\iii}(C)\cap B(x,R)$ and hence $\#\BB\ge c\bigl(\frac r {R}\bigr)^{-t}$. 
  We may thus assume that $\diam(\fii_\iii(C))\le c_1 R$, whence
  \[\diam(\fii_\iii(E_C))=\frac{\diam(E_C)}{\diam(C)}\diam(\fii_\iii(C))\le R/2.\]
  Now choose $x'\in E\cap\fii_\iii(E_C)$. Then $x'\in B(x,R/2)$ and $\BB$ is a cover of $E_C\cap B(x',R/2)\subset E_C\cap B(x,R)$,
  and so the previous case where the ball is centered in $E$, applied for $x'\in E$, yields that
  \begin{equation*}
    \#\BB\ge c\Bigl(\frac r {R/2}\Bigr)^{-t} \approx \Bigl(\frac r {R}\Bigr)^{-t}.
  \end{equation*}
  Thus $\diml(E_C)\ge t$, and so the claim $\diml(E_C)\ge\diml(C)$ follows.
\end{proof}

We finish this section by examining the COSC assumption in the previous theorems.

\begin{example}\label{ex:no_separation}
  Let us first recall that, by Theorem \ref{thm:dim of inhomog}, the dimension formula $\dima(E_C) = \max\{ \dima(E), \dima(C) \}$ holds provided that the COSC holds, i.e.\ there is lots of separation. Interestingly, the formula holds in the real line also when there is lots of overlap, since Fraser, Henderson, Olson, and Robinson \cite[Theorem 3.1]{FraserHendersonOlsonRobinson2015} showed that if the weak separation condition does not hold, and not all the mappings have the same fixed point, then $\dima(E)=1$. In this case, the formula follows from the monotonicity of the Assouad dimension. Note that the inequality $\dima(E_C)\ge\max\{ \dima(E), \dima(C) \}$ is always valid.

  The following construction shows that the open set condition is not enough for the formula to hold. Consider $\fii_1,\fii_2\colon\R\to\R$, $\fii_1(x)=\frac 1 3 x$ and $\fii_2(x)=\frac 1 3 x + \frac 2 3$,
 so that the self-similar set $E$ is the usual $\frac 1 3$-Cantor set and $\dima(E)=\frac{\log 2}{\log 3}$. Define first
 \[
 C=\bigl\{\bigl(1+\tfrac 1 {j+1}\bigr)3^{-j} : j\in\N \bigr\}\cup\{0\},
 \]
 and consider the inhomogeneous self-similar set $E_C$ with condensation $C$; 
 it is immediate that $E$ satisfies the open set condition but $E_C$ does not satisfy the COSC. It is easy to see that $\dima(C)=0$. On the other hand, $E_C$ contains for each $k\in\N$ a dilated and translated copy of the set $\{\frac 1 j : j \in \{ 2,3,\ldots,k \}\}$, and so the set $W=\{\frac 1 j : j \in \{ 2,3,\ldots \}\}$ is a weak tangent of $E_C$; see~\cite[Section~6]{MackayTyson2010} for the definition. Thus it follows from~\cite[Proposition~6.1.5]{MackayTyson2010} that $\dima(E_C)\ge\dima(W)=1$. In particular $\dima(E_C)=1>\max\{ \dima(E), \dima(C) \}$, and so the conclusion of Theorem~\ref{thm:dim of inhomog} does not hold.

 Let us then focus on the lower dimension. Let $\fii_1,\fii_2$ be as above, but consider now the condensation set $C=[4/10,6/10]\cup\{1/6\}$. Then $\diml(E)=\frac{\log 2}{\log 3}$ and $\diml(C)=0$, and $E_C$ does not satisfy the COSC due to overlaps. Moreover, it is easy to see that each ball $B(x,r)$ with $x\in E_C$ and $0<r\le 1$ contains an interval of length $cr$, where $0<c<1$ is independent of $x$ and $r$. Thus $\diml(E_C)=1>0=\diml(C)$, showing that the conclusion of Theorem~\ref{thm:dim of inhomog2} does not hold. Observe, however, that the inequality $\diml(E_C)\ge\diml(C)$ is always valid. This is not a triviality since $\diml$ is not monotone, but it can be seen from the proof of Theorem~\ref{thm:dim of inhomog2}.
\end{example}

\section{Regularity of measures on inhomogeneous self-similar sets}

Vol'berg and Konyagin~\cite[Theorem~4]{VolbergKonyagin1987} gave an example of a compact set $X\subset\R^n$ such that $\dima(X) < \udimreg(\mu)$ for all doubling measures $\mu$ supported on $X$. Their set $X$ was an inhomogeneous self-similar set with a single point condensation satisfying the strong separation condition, and all the similitudes in the construction had the same contraction coefficient. The result of~\cite[Theorem~4]{VolbergKonyagin1987} is a special case of the following theorem, which reveals a general phenomenon behind the construction of Vol'berg and Konyagin.

\begin{theorem}\label{thm:upper_reg}
  Let $C \subset \R^d$ be a nonempty compact set, let $\{ \fii_i \}$ be a similitude IFS satisfying the COSC with condensation $C$, and let $E_C$ be the inhomogeneous self-similar set with condensation $C$. If the associated self-similar set $E$ is such that $\dima(C) < \dima(E)$, then $\dima(E_C) < \udimreg(\mu)$ for all measures $\mu$ supported on $E_C$.
\end{theorem}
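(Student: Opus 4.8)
The plan is to reduce to the case of a doubling measure, show that every condensation copy $\fii_\iii(C)$ carries a fixed proportion of the mass of the surrounding piece $\fii_\iii(E_C)$, convert this into a geometric decay of the mass of the generation-$n$ pieces, and finally locate an anomalously light copy whose separation from the rest of $E_C$ forces the lower regularity bound \eqref{eq:lower_reg} to fail for every exponent below some $s_0 > \dima(E_C)$. If $\mu$ is not doubling then $\udimreg(\mu) = \infty$ and there is nothing to prove, so I assume $\mu$ doubling. By Theorem~\ref{thm:dim of inhomog} and the open set condition $\dima(E_C) = \max\{\dima(E),\dima(C)\} = \gamma$, where $\gamma = \dima(E) = \diml(E)$ satisfies $\sum_i \Lip(\fii_i)^\gamma = 1$; thus it suffices to prove $\udimreg(\mu) > \gamma$. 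Throughout write $m = \mu(E_C)$, $g_\iii = \mu(\fii_\iii(C))$ and $m_\iii = \mu(\fii_\iii(E_C))$.

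The key estimate is that there is a constant $c_1 \in (0,1)$, independent of $\iii$, with $g_\iii \ge c_1 m_\iii$ for all $\iii \in \Sigma_*$. To obtain it, fix $z \in \fii_\iii(C)$. By the separation estimate~\eqref{eq:kaukana}, the ball $B(z,\tfrac{d}{2}\Lip(\fii_\iii))$ meets $E_C$ only inside $\fii_\iii(C)$, so its $\mu$-measure is at most $g_\iii$; on the other hand $\fii_\iii(E_C) \subset B(z,\Lip(\fii_\iii)\diam(E_C))$, so the latter ball has measure at least $m_\iii$. The two radii differ by the fixed factor $2\diam(E_C)/d$, so finitely many applications of the doubling property give $m_\iii \le C g_\iii$ with $C$ depending only on the doubling constant and $d$, which is the claim. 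I expect this step, together with the fact that $\gamma = \diml(E) = \dima(E)$ enters only through the Moran identity $\sum_{|\iii|=n}\Lip(\fii_\iii)^\gamma = 1$, to be the crux; note in particular that it uses $\dima(C) < \dima(E)$ only via Theorem~\ref{thm:dim of inhomog} to identify $\dima(E_C)$ with $\gamma$.

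With the key estimate in hand I would run a recursion on $S_n = \mu\bigl(\bigcup_{|\iii|=n}\fii_\iii(E_C)\bigr)$ and $G_n = \sum_{|\iii|=n} g_\iii$. Because the copies $\fii_\iii(C)$ are separated from the rest of $E_C$, the generation-$n$ copies are pairwise disjoint and $\bigcup_{|\iii|=n+1}\fii_\iii(E_C)$ arises from $\bigcup_{|\iii|=n}\fii_\iii(E_C)$ by deleting them, so that $S_{n+1} = S_n - G_n$. The key estimate and subadditivity give $G_n \ge c_1 \sum_{|\iii|=n} m_\iii \ge c_1 S_n$, whence $S_{n+1} \le (1-c_1)S_n$ and therefore $G_n \le S_n \le (1-c_1)^n m$ for every $n$.

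Finally I extract the bad scales. Let $s_0$ be defined by $\sum_i \Lip(\fii_i)^{s_0} = 1-c_1$; since $s \mapsto \sum_i \Lip(\fii_i)^s$ is strictly decreasing and equals $1$ at $\gamma$, this $s_0$ exists and satisfies $s_0 > \gamma$. As $\sum_{|\iii|=n}\Lip(\fii_\iii)^{s_0} = (1-c_1)^n$ while $G_n \le (1-c_1)^n m$, an averaging argument yields for each $n$ a word $\iii_n$ with $|\iii_n| = n$ and $g_{\iii_n} \le \Lip(\fii_{\iii_n})^{s_0}\,m$. Taking $z_n \in \fii_{\iii_n}(C)$, $r_n = \tfrac{d}{2}\Lip(\fii_{\iii_n})$ and $R = \diam(E_C)/2$, the separation~\eqref{eq:kaukana} gives $\mu(B(z_n,r_n)) \le g_{\iii_n} \le \Lip(\fii_{\iii_n})^{s_0} m$, while doubling gives $\mu(B(z_n,R)) \gtrsim m$. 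Since $\Lip(\fii_{\iii_n}) \le \ualpha^n \to 0$, for every $s < s_0$ the ratio $\mu(B(z_n,r_n))\big/\bigl(\mu(B(z_n,R))(r_n/R)^s\bigr)$ is $\lesssim \Lip(\fii_{\iii_n})^{s_0 - s} \to 0$, so no constant can make \eqref{eq:lower_reg} hold with exponent $s$. Hence $\udimreg(\mu) \ge s_0 > \gamma = \dima(E_C)$, as desired. The delicate point to verify carefully is the strictness $s_0 > \gamma$, which is exactly the gain produced by $c_1 > 0$ through $1 - c_1 < 1 = \sum_i\Lip(\fii_i)^\gamma$.
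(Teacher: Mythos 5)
Your argument is correct, and it rests on exactly the same key lemma as the paper's proof: after reducing to the case of a doubling $\mu$ and invoking Theorem~\ref{thm:dim of inhomog} to replace $\dima(E_C)$ by $\gamma=\dima(E)$, you use the COSC separation \eqref{eq:kaukana} together with boundedly many doublings to show that each condensation copy $\fii_\iii(C)$ carries a fixed proportion $c_1$ of the mass of $\fii_\iii(E_C)$. Where you genuinely diverge is in the endgame. The paper rewrites this estimate as $\sum_j \mu(\fii_{\iii j}(E_C)) \le (1-c_1)\mu(\fii_\iii(E_C))$, compares it with the Moran identity $\sum_j \Lip(\fii_{\iii j})^\gamma = \Lip(\fii_\iii)^\gamma$, and greedily selects at each step one child whose relative mass beats the $\gamma$-power of its relative contraction by the factor $1-c_1$; iterating along the resulting infinite word gives $\mu(\fii_{\iii|_n}(E_C)) \le (1-c_1)^n \Lip(\fii_{\iii|_n})^\gamma \mu(E_C)$ and hence $\udimreg(\mu) \ge \gamma+\eps$ for a suitable explicit $\eps>0$ depending on $c_1$ and $\lalpha$. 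You instead run a global recursion on the total generation-$n$ mass $S_n$ and pigeonhole against the weights $\Lip(\fii_\iii)^{s_0}$, where $\sum_i \Lip(\fii_i)^{s_0}=1-c_1$; this produces an equally explicit gap ($s_0$ in place of $\gamma+\eps$) and avoids having to nest the chosen pieces along a single branch. Your identity $S_{n+1}=S_n-G_n$ does hold, because the sets $\fii_\iii(C)$ with $|\iii|=n$ are pairwise disjoint and, by \eqref{eq:kaukana}, disjoint from $\bigcup_{|\jjj|=n+1}\fii_\jjj(E_C)$. Two small slips you should fix: the phrase ``the generation-$n$ copies are pairwise disjoint'' must refer to the condensation copies $\fii_\iii(C)$ and not to the pieces $\fii_\iii(E_C)$, whose closures may touch under the COSC (for the recursion you only need disjointness of the former plus the subadditivity bound $\sum_{|\iii|=n} m_\iii \ge S_n$); and in the final step the inequality that fails at the points $z_n$ is the upper regularity condition $\mu(B(x,r))/\mu(B(x,R)) \ge c(r/R)^s$ defining $\udimreg(\mu)$, not the lower regularity condition \eqref{eq:lower_reg} that you cite.
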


\begin{proof}
  Let $\mu$ be a measure supported on $E_C$. 
  We may assume that $\mu$ is doubling since otherwise $\udimreg(\mu) = \infty$. Observe that, by Theorem 4.1, it suffices to show that $\dima(E)< \udimreg(\mu)$. 

  By the COSC, $d = \dist(C,E)>0$. Fix $x \in C$ and choose $0<\roo<d$ such that $B(x,\roo) \subset U$, where $U$ is the open set from the COSC. Then we have for all $\iii \in \Sigma_*$ that $B(\fii_\iii(x),\roo \Lip(\fii_\iii) ) \subset \fii_\iii(U)$ and hence, by the COSC and the choices of $d$ and $\roo$,
  \begin{equation}\label{eq:roo-is-good}
    \mu(B(\fii_\iii(x),\roo \Lip(\fii_\iii))) \le \mu(\fii_\iii(C)) \le \mu(\fii_\iii(E_C)).
  \end{equation}
  Since $\fii_\iii(E_C) \subset B(\fii_\iii(x),\diam(\fii_\iii(E_C)))$ for all $\iii \in \Sigma_*$ and $\mu$ is doubling, 
  we thus obtain that
  \begin{align*}
    \mu(\fii_\iii(E_C)) &\le \mu(B(\fii_\iii(x),\diam(\fii_\iii(E_C)))) \\
    &\le D\mu(B(\fii_\iii(x),\roo\diam(E_C)^{-1} \diam(\fii_\iii(E_C)))) \\
    &= D\mu(B(\fii_\iii(x),\roo\Lip(\fii_\iii))) \le D\mu(\fii_\iii(C))
  \end{align*}
  for some constant $D>1$, independent of $\iii \in \Sigma_*$. Then
  \begin{align*}
    \mu(\fii_\iii(E_C)) &= \mu(\fii_\iii(C)) + \sum_{j \in \Sigma_1} \mu(\fii_{\iii j}(E_C)) \ge D^{-1}\mu(\fii_\iii(E_C)) + \sum_{j \in \Sigma_1} \mu(\fii_{\iii j}(E_C)),
  \end{align*}
  and so we have
  \begin{equation}\label{eq:am-muu}
    \sum_{j \in \Sigma_1} \mu(\fii_{\iii j}(E_C)) \le C_0\mu(\fii_\iii(E_C))
  \end{equation}
  for all $\iii \in \Sigma_*$, where $C_0=(1-D^{-1})<1$. 

  On the other hand, the self-similar set $E$ satisfies the open set condition, and hence $1 = \sum_{j \in \Sigma_1} \Lip(\fii_j)^\lambda$, where $\lambda = \dima(E) = \diml(E)$. From this we obtain that 
  \begin{equation}\label{eq:lip-pii}
    \Lip(\fii_\iii)^\lambda = \sum_{j \in \Sigma_1} \Lip(\fii_{\iii j})^\lambda
  \end{equation}
  for all $\iii \in \Sigma_*$. Combining~\eqref{eq:am-muu} and~\eqref{eq:lip-pii}, we see that
  \begin{equation*}
    \sum_{j \in \Sigma_1} \frac{\mu(\fii_{\iii j}(E_C))}{\mu(\fii_\iii(E_C))} \le \sum_{j \in \Sigma_1} C_0 \biggl(\frac{\Lip(\fii_{\iii j})}{\Lip(\fii_\iii)}\biggr)^\lambda,
  \end{equation*}
  and so we find for each $\iii \in \Sigma_*$ some $j\in\Sigma_1$ for which
  \begin{equation}\label{eq:the-j}
    \frac{\mu(\fii_{\iii j}(E_C))}{\mu(\fii_\iii(E_C))} \le C_0\biggl(\frac{\Lip(\fii_{\iii j})}{\Lip(\fii_\iii)}\biggr)^\lambda.
  \end{equation}
  Iteration of~\eqref{eq:the-j}, together with~\eqref{eq:roo-is-good}, shows that there is $\iii \in \Sigma$ such that
  \begin{align*}
    \mu(B(x_n,\roo\Lip(\fii_{\iii|_n}))) 
    & \le \mu(\fii_{\iii|_n}(E_C)) \le C_0^n \Lip(\fii_{\iii|_n})^{\lambda} \mu(E_C) \\
    & \le C_0^n \Lip(\fii_{\iii|_n})^{\lambda} \mu(B(x_n,\diam(E_C)))
  \end{align*}
  for every $n \in \N$, where $x_n=\fii_{\iii|_n}(x)$.

  Let $\lalpha = \min_i \Lip(\fii_i)<1$ and choose $0 < \eps < \log C_0^{-1} / \log\lalpha^{-1}$.
  Then $C_0\lalpha^{-\eps}<1$ and $\lalpha^n\le \Lip(\fii_{\iii|_n})$, and so it follows from the previous estimate that
  \begin{align*}
    \frac{\mu(B(x_n,\roo\Lip(\fii_{\iii|_{n}})))}{\mu(B(x_n,\diam(E_C)))} & \le C_0^n \Lip(\fii_{\iii|_n})^{\lambda} 
     = C_0^n \lalpha^{-n\eps} \lalpha^{n\eps}\Lip(\fii_{\iii|_n})^{\lambda}\\ 
     & \le (C_0 \lalpha^{-\eps})^n \Lip(\fii_{\iii|_n})^{\lambda+\eps}
  \end{align*}
  for all $n\in\N$. Since $(C_0 \lalpha^{-\eps})^n\to 0$ as $n\to\infty$, we conclude that $\udimreg(\mu) \ge \lambda +\eps>\dima(E)$, which proves the claim.
\end{proof}

For the lower regularity dimension, the statement corresponding to Theorem~\ref{thm:upper_reg}  would be that if $\diml(C)>\diml(E)$, then $\diml(E_C) > \ldimreg(\mu)$ for all measures $\mu$ supported on $E_C$. This, however, is not the case, as the following example shows.

\begin{example}\label{ex:non-VK}
 Consider again the usual $\frac 1 3$-Cantor set $E$, as in Example~\ref{ex:no_separation}. Let $C=[4/10,6/10]$ be the condensation set and take $\mu$ to be the restriction of the Lebesque measure to $E_C$. As noted in Example~\ref{ex:no_separation}, each ball $B(x,r)$ with $x\in E_C$ and $0<r\le 1$ contains an interval of length $cr$, where $0<c<1$ is independent of $x$ and $r$, and thus
 $cr\le \mu(B(x,r))\le 2r$ for all $x\in E_C$ and $0<r\le 1$. Hence $\mu$ is $1$-regular, and so in particular $\ldimreg(\mu)=1=\diml(E_C)$. 
\end{example}

This example is a special case of the following proposition.

\begin{proposition}\label{prop:non-VK}
 Let $C \subset \R^d$ be a compact $s$-regular set and let $\{ \fii_i \}$ be a similitude IFS satisfying the COSC with condensation $C$. If $E$ is the associated self-similar set such that $\diml(E)<s$, then for $\mu=\HH^{s}|_{E_C}$ it holds that $\ldimreg(\mu)=s=\diml(E_C)$.
\end{proposition}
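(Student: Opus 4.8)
The plan is to prove the stronger statement that $\mu=\HH^s|_{E_C}$ is \emph{$s$-regular}; by the remark following the definition of $s$-regularity (and since $E_C$ is bounded) this immediately gives $\ldimreg(\mu)=s$. The identity $\diml(E_C)=s$ then comes for free: an $s$-regular set $C$ carries an $s$-regular measure whose upper and lower regularity dimensions both equal $s$, and these sandwich $\diml(C)\le\dima(C)$, so $\diml(C)=s$; Theorem~\ref{thm:dim of inhomog2} gives $\diml(E_C)=\diml(C)=s$. Note also that $\ldimreg(\mu)\le\diml(E_C)$ always holds by \eqref{eq:lower_reg}, so in principle only the lower bound $\ldimreg(\mu)\ge s$ is needed; establishing full $s$-regularity is the cleanest route to it.

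First I would fix the structure. Writing $\gamma=\diml(E)=\dima(E)<s$, the open set condition makes $E$ a $\gamma$-regular set, so $\HH^s(E)=0$. By \eqref{eq:kaukana} the copies $\{\fii_\iii(C)\}_{\iii\in\Sigma_*}$ are pairwise disjoint and $d\Lip(\fii_\iii)$-separated, so \eqref{eq:inhomog-eq} yields $\mu=\sum_{\iii\in\Sigma_*}\HH^s|_{\fii_\iii(C)}$. Since $\Lip(\fii_i)^s<\Lip(\fii_i)^\gamma$ and $\sum_i\Lip(\fii_i)^\gamma=1$, the total mass $\HH^s(E_C)=\HH^s(C)\sum_{\iii}\Lip(\fii_\iii)^s=\HH^s(C)\bigl(1-\sum_i\Lip(\fii_i)^s\bigr)^{-1}$ is finite, so $\mu$ is a genuine finite measure; moreover $\fii_\iii(E_C)\subseteq E_C$ and the scaling of $\HH^s$ give $\mu(\fii_\iii(E_C))=\Lip(\fii_\iii)^s\mu(E_C)$ for every $\iii$.

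For the lower bound $\mu(B(x,r))\gtrsim r^s$ I would argue exactly as in Theorem~\ref{thm:dim of inhomog2}. If $x\in E$, choose $\iii$ with $\diam\fii_\iii(E_C)\le r<\diam\fii_{\iii^-}(E_C)$; then $\fii_\iii(E_C)\subset B(x,r)$, whence $\mu(B(x,r))\ge\Lip(\fii_\iii)^s\mu(E_C)\gtrsim r^s$. If $x\in\fii_\jjj(C)$, then either $\diam\fii_\jjj(C)\gtrsim r$, in which case the scaled $s$-regularity of $C$ gives $\HH^s(\fii_\jjj(C)\cap B(x,r))\gtrsim r^s$ directly, or $\diam\fii_\jjj(C)$ is small, in which case a point $x'\in E\cap\fii_\jjj(E_C)$ lies in $B(x,r/2)$ and reduces us to the previous case with radius $r/2$.

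The main work, and the step I expect to be the genuine obstacle, is the upper bound $\mu(B(x,r))\le Cr^s$. Set $\roo=r/\diam(E_C)$ and split the copies meeting $B(x,r)$ according to whether $\Lip(\fii_\iii)\le\roo$ or $\Lip(\fii_\iii)>\roo$. The subordinate copies ($\Lip(\fii_\iii)\le\roo$) are contained in the cutset cylinders $\fii_\iii(E_C)$ with $\Lip(\fii_\iii)\le\roo<\Lip(\fii_{\iii^-})$ that meet $B(x,r)$; since these cylinders contain pairwise disjoint open sets $\fii_\iii(U)$ of Lebesgue volume $\approx r^d$ lying in a bounded dilate of $B(x,r)$, a volume-packing argument in $\R^d$ bounds their number by a constant $K$, so their total contribution is at most $K\roo^s\mu(E_C)\lesssim r^s$. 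For the remaining copies ($\Lip(\fii_\iii)>\roo$) I would group them dyadically by $\Lip(\fii_\iii)\in(2^k\roo,2^{k+1}\roo]$ for $k\ge 0$: by \eqref{eq:kaukana} any two of them are more than $d2^k\roo$ apart, so the number meeting $B(x,r)$ is $\lesssim 2^{-kd}$, while the scale-free upper Ahlfors bound for the scaled copy gives $\HH^s(\fii_\iii(C)\cap B(x,r))\le Cr^s$ for each. Summing the geometric series $\sum_{k\ge 0}2^{-kd}$ then controls this part by $\lesssim r^s$ as well. The delicate points are the uniformity in $x$ and $r$ of the packing constant $K$ and of the per-scale count, both of which rest on comparing the open-set sizes and the separation in \eqref{eq:kaukana} to $r$. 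Once these are in place, combining the two parts gives $\mu(B(x,r))\le Cr^s$, so $\mu$ is $s$-regular and $\ldimreg(\mu)=s=\diml(E_C)$, as claimed.
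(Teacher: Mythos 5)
Your overall strategy --- prove that $\mu=\HH^s|_{E_C}$ is $s$-regular, deduce $\ldimreg(\mu)=s$, and get $\diml(E_C)=s$ from Theorem~\ref{thm:dim of inhomog2} --- is exactly the paper's, and most of the details are right: the lower bound is the argument of Theorem~\ref{thm:dim of inhomog2}; the finiteness of $\mu(E_C)$ via $\sum_{\iii\in\Sigma_*}\Lip(\fii_\iii)^s=\bigl(1-\sum_i\Lip(\fii_i)^s\bigr)^{-1}<\infty$ is precisely where $\diml(E)<s$ enters (the paper cites Falconer for the same fact, your geometric series is an equally valid and more elementary route); and the treatment of the subordinate copies through the cutset cylinders $\Lip(\fii_\iii)\le\roo<\Lip(\fii_{\iii^-})$ is sound.

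There is, however, a genuine error in your treatment of the large copies, i.e.\ those $\fii_\iii(C)$ with $\Lip(\fii_\iii)>\roo$. You claim that the copies in the dyadic band $\Lip(\fii_\iii)\in(2^k\roo,2^{k+1}\roo]$ meeting $B(x,r)$ number $\lesssim 2^{-kd}$ and then sum a geometric series. The packing bound for $\delta$-separated sets meeting a ball of radius $r$ is $\lesssim(1+r/\delta)^d$, not $(r/\delta)^d$: when $\delta=d\,2^k\roo>2r$ the correct count is ``at most one'', whereas $2^{-kd}<1$ would force it to be zero. Since the bands range over $0\le k\lesssim\log_2(1/\roo)$ and each can genuinely contribute one copy, summing the \emph{corrected} per-band bounds only yields $\mu(B(x,r))\lesssim r^s\log(1/r)$, which is not enough. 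The repair is to drop the dyadic decomposition: by \eqref{eq:kaukana} every large copy is at distance at least $d\Lip(\fii_\iii)>d\roo$ from the rest of $E_C$, so \emph{all} large copies meeting $B(x,r)$ are mutually $d\roo$-separated and hence number at most a constant depending only on the ambient dimension, $\diam(E_C)$, and the separation constant of \eqref{eq:kaukana} --- this is exactly the bounded-multiplicity observation the paper imports from the proof of Theorem~\ref{thm:dim of inhomog}. Each such copy contributes at most $Cr^s$ by the scale-free upper regularity of $C$, and with this one-line fix your proof closes and coincides with the paper's.
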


\begin{proof}
 Again, as in Example \ref{ex:non-VK}, $\mu(B(x,r))\ge cr^s$ is clear, since each ball contains a relatively large copy of $C$.
 To show that $\mu(B(x,r))\le Cr^s$ observe first that, similarly as in the proof of Theorem \ref{thm:dim of inhomog}, the ball $B(x,r)$ intersects only $K_1$ many copies of $E_C$ at the scale $r$ where $K_1$ does not depend on $x$ nor $r$. Since $\fii_\iii(C)$ is $s$-regular, we have $\mu(\fii_\iii(C)) \lesssim \Lip(\fii_\iii)^s$ for all $\iii \in \Sigma_*$, where the associated constants do not depend on $\iii$. Therefore, it suffices to show that $\mu(E_C) \le \mu(\bigcup_{\iii \in \Sigma_*} \fii_\iii(C)) \lesssim \sum_{\iii \in \Sigma_*} \Lip(\fii_\iii)^s < \infty$. By \cite[Proposition 4.1]{Falconer1988}, a real number $t$ satisfies $\sum_i \Lip(\fii_i)^t = 1$ if and only if $t = \inf\{ s : \sum_{\iii \in \Sigma_*} \Lip(\fii_\iii)^s < \infty \}$. Since $t = \diml(E)$ satisfies $\sum_i \Lip(\fii_i)^t = 1$, the assumption $\diml(E) < s$ finishes the proof.
\end{proof}

Proposition~\ref{prop:non-VK} shows that we cannot hope to find for the lower regularity dimension a statement corresponding to Theorem~\ref{thm:upper_reg} using inhomogeneous self-similar sets with separation. This is in a huge contrast to the upper regularity dimension case. We are left with the following open question.

\begin{question}
 Does there exist a finitely doubling uniformly perfect complete metric space $X$ such that $\ldimreg(\mu) < \diml(X)$ for all measures $\mu$ supported on $X$.
\end{question}


\end{document}